\newcommand{\co}{\colon \thinspace}
\newcommand{\onto}{\twoheadrightarrow}
\newcommand\call{\mathcal{L}}
\newcommand\caln{\mathcal{N}}
\newcommand\cals{\mathcal{S}}
\newcommand\rk{\mathrm{rk}\,}
\newcommand\rg{\mathrm{rg}\,}
\newcommand\lcap{\wedge_{\mathcal{L}}}
\newcommand\essint{\cap_{\mathit{ess}}}
\newcommand\acurve{\mathfrak{a}}
\newcommand\bcurve{\mathfrak{b}}
\theoremstyle{definition}
\newtheorem{para}{}[section]
\newtheorem{dfn}[para]{Definition}
\theoremstyle{plain}
\newtheorem{thm}[para]{Theorem}
\newtheorem{lemma}[para]{Lemma}
\newtheorem{cor}[para]{Corollary}
\newtheorem*{thm1}{Theorem}
\newtheorem{prop}[para]{Proposition}
\theoremstyle{remark}
\newtheorem*{remark}{Remark}
\begin{document}

\title{Explicit rank bounds for cyclic covers}
\author{Jason DeBlois}
\address{Department of Mathematics, University of Pittsburgh}
\email{jdeblois@pitt.edu}
\thanks{This paper was originally written during a period of partial NSF support}

\begin{abstract}  For a closed, orientable hyperbolic 3-manifold $M$ and a homomorphism $\phi \co \pi_1(M) \onto \mathbb{Z}$, that is not induced by a fibration $M\to S^1$, we give a lower bound on the ranks of the subgroups $\phi^{-1}(n\mathbb{Z})$, $n\in\mathbb{N}$, that is linear in $n$.  The key new ingredient is the following result: if $M$ is a closed, orientable hyperbolic $3$-manifold and $S$ is a connected, two-sided incompressible surface of genus $g$ that is not a fiber or semi-fiber, then a reduced homotopy in $(M,S)$ has length at most $14g-12$.
\end{abstract}
\maketitle

The \textit{rank} of a group $G$, $\rk G$, is the minimal cardinality of a generating set.  This paper gives lower bounds on rank of $\pi_1$ among cyclic covers of certain $3$-manifolds:

\begin{thm}\label{explicit}  For a closed, orientable hyperbolic $3$-manifold $M$, a homomorphism $\phi\co\pi_1 M\onto\mathbb{Z}$ and $n\geq 2$, let $\Gamma_n = \phi^{-1}(n\mathbb{Z})$.  Let $\|\phi\|$ denote the Thurston norm of the cohomology class of $\phi$.  If $\phi$ is not induced by a fibration $M\to S^1$ then
$$ \rk \Gamma_n \geq \frac{n-1}{7\|\phi\|+2} $$\end{thm}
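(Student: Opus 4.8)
The plan is to geometrize $\|\phi\|$, present $M_n$ as a cyclic chain of copies of $M$ cut along a norm‑minimizing surface, and use the bound on reduced homotopies to show that a generating set which is too small cannot ``reach across'' the chain. First I would fix a connected surface $S\subset M$ of genus $g$, dual to the cohomology class of $\phi$, with connected complement $N := M\cut S$ and $-\chi(S)=\|\phi\|$. Being norm‑minimizing, $S$ is incompressible; it is not a fiber (as $\phi$ is not fibered) and, being non‑separating, not a semi‑fiber, so in particular $N\not\cong S\times I$. As $M$ is hyperbolic $S$ is neither a sphere nor a torus, so $g\ge 2$, $\|\phi\|=2g-2$, and hence $14g-12=7\|\phi\|+2$; the quoted result on reduced homotopies then says that every reduced homotopy in $(M,S)$ has length at most $7\|\phi\|+2$. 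The cover $p\co M_n\to M$ of $\Gamma_n$ has $\widehat S := p^{-1}(S)=S_0\sqcup\dots\sqcup S_{n-1}$ and $M_n\cut\widehat S=N_0\sqcup\dots\sqcup N_{n-1}$ (copies of $N$), assembled cyclically into a ``necklace'' $N_0\cup_{S_1}N_1\cup_{S_2}\dots\cup_{S_{n-1}}N_{n-1}\cup_{S_0}N_0$; the infinite cyclic cover $\widetilde M$ of $M$ is the bi‑infinite chain $\dots N_{-1}N_0N_1\dots$, and $K:=\ker\phi=\pi_1\widetilde M$.

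Next I would put a minimal generating set in position. Let $r=\rk\Gamma_n$; after Nielsen transformations (which preserve $r$) I may assume $\phi(\gamma_1)=n$ and $\phi(\gamma_i)=0$ for $i\ge 2$, so $\gamma_2,\dots,\gamma_r\in K$, and Reidemeister–Schreier with the transversal $\{\gamma_1^{\,j}:j\in\mathbb{Z}\}$ exhibits $K$ as generated by the conjugates $\gamma_1^{\,j}\gamma_i\gamma_1^{-j}$ ($j\in\mathbb{Z},\ 2\le i\le r$). Realize the tuple by a $\pi_1$‑surjective map $f$ from a wedge of $r$ circles (based in $N_0$) to $M_n$, and homotope $f$ into reduced position relative to $\widehat S$; lifting to $\widetilde M$, the ``wrap'' loop $\gamma_1$ should then cross $N_0,\dots,N_n$ essentially monotonically (once each), while each $\gamma_i$, $i\ge 2$, is confined to a ``window'' of consecutively indexed blocks of $\widetilde M$ containing $N_0$. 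The crux is a dichotomy: if the $r-1$ windows of $\gamma_2,\dots,\gamma_r$, reduced modulo $n$, fail to cover $\mathbb{Z}/n\mathbb{Z}$, then for a missed residue $m$ every Schreier generator passes through $N_m$ only along the single arc used by $\gamma_1$, so $K$ lies in a proper subgroup of $\pi_1\widetilde M$ (the subgroup detecting essential passage through $N_m$, proper precisely because $N\not\cong S\times I$), contradicting $K=\pi_1\widetilde M$. Hence those windows cover $\mathbb{Z}/n\mathbb{Z}$, i.e.\ their union is an interval of at least $n$ consecutive blocks of $\widetilde M$.

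It then remains to bound the width of each window by $7\|\phi\|+2$: a reduced loop meeting $w$ consecutive blocks of $\widetilde M$ exhibits, between its extreme crossings with $\widehat S$, a reduced homotopy in $(M,S)$ of length of order $w$, so the lemma gives $w\le 7\|\phi\|+2$. Since the $r-1$ windows of $\gamma_2,\dots,\gamma_r$ together cover at least $n$ consecutive blocks and each has width at most $7\|\phi\|+2$, we get $(r-1)(7\|\phi\|+2)\ge n$, hence $\rk\Gamma_n=r>\tfrac{n-1}{7\|\phi\|+2}$, as claimed. The hard part — and the step where the new lemma is genuinely indispensable, rather than just the qualitative failure of fibering — is making ``reduced position'' of the carrier graph precise enough that the wrap generator really behaves monotonically across each block and that a width‑$w$ window really forces a reduced homotopy in $(M,S)$ of length $\asymp w$; this amounts to ruling out ``thin necks'' and similar degeneracies in reduced carrier graphs, and it is exactly here that the no‑fibering hypothesis (reappearing concretely as $N\not\cong S\times I$ in the dichotomy) gets converted into a bound that is linear in $n$.
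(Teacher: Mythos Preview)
Your setup matches the paper's: one reduces to a connected non-separating norm-minimizing surface $S$ of genus $g\ge 2$ with $\|\phi\|=2g-2$, and then invokes the key lemma that reduced homotopies in $(M,S)$ have length at most $14g-12=7\|\phi\|+2$.  From here, however, the paper does \emph{not} argue directly with carrier graphs.  It passes to the Bass--Serre tree $T$ of the splitting of $\pi_1M$ along $S$, uses the bound on reduced homotopies (via Lemma~\ref{chopped annulus}) to show that the $\pi_1M$-action on $T$ is $(14g-12)$-acylindrical, observes that $\Gamma_n$ inherits this action with $\Gamma_n\backslash T$ having $n$ vertices, and then applies Weidmann's acylindrical accessibility theorem (which bounds the number of vertices of $\Gamma_n\backslash T$ by $1+2k(\rk\Gamma_n-1)$) to conclude.

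Your proposed route has a genuine gap at the step you yourself flag as ``the hard part''.  The assertion that a reduced loop $\gamma_i$ occupying a window of $w$ consecutive blocks of $\widetilde M$ ``exhibits\dots a reduced homotopy in $(M,S)$ of length of order $w$'' is not correct as stated.  A reduced homotopy in the sense of Definition~\ref{BCSZ stuff} is a map $H\colon K\times I\to M$ with $H(K\times\partial I)\subset S$ and non-trivial $\pi_1$-image; the bound of Theorem~\ref{bdd length} constrains such annular (or thicker) objects.  A single based loop $\gamma_i\colon S^1\to M$, however long its reduced word in the HNN extension, intersects $S$ in \emph{points}, not curves, and the arcs between consecutive crossings are simply paths in $N$---they do not assemble into a non-degenerate homotopy of a curve on $S$.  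In the tree picture: the length of $\gamma_i$'s reduced form measures the translation length (or displacement) of $\gamma_i$ on $T$, whereas the reduced-homotopy bound controls the length of segments \emph{pointwise fixed} by a non-trivial element (this is exactly the content of Lemma~\ref{chopped annulus}).  These are different quantities, and there is no bound on the former in terms of the latter---indeed, elements of arbitrarily long reduced form exist in any HNN extension.

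What actually converts acylindricity into a rank bound is a Nielsen/folding argument on the tree: one shows that if a generating set's minimal subtree is too large relative to $\rk\Gamma_n$, then two generators have long overlapping fixed segments, and acylindricity then allows a Nielsen move reducing complexity.  This is precisely Weidmann's theorem, and reproving it in situ would require substantially more than the dichotomy you sketch (which is also not quite right: missing a residue $m$ does not obviously confine $K$ to a proper subgroup, since nothing yet controls how the $\gamma_1$-conjugates of $\gamma_i$ interact with $N_m$).  The cleanest fix is to do what the paper does: package the reduced-homotopy bound as acylindricity of the tree action and cite Weidmann.
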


The \textit{Thurston norm} of the cohomology class of $\phi$ is defined as the minimum, taken over all surfaces $S$ embedded in $M$ representing the Poincar\'e dual of $\phi$, of $\sum_{i=1}^k \max\{-\chi(S_i),0\}$, where the $S_i$ are the components of $S$.  See \cite{Thurston_norm}.  Theorem \ref{explicit} immediately implies the following bound on \textit{rank gradient} of the pair $(\pi_1 M, \{\Gamma_n\})$, defined by Lackenby \cite{Lack_rank} as $\liminf_{n\to\infty} (\rk \Gamma_n-1)/n$.

\begin{cor}\label{closed posgrad}  For a closed, orientable hyperbolic $3$-manifold $M$, a homomorphism $\phi\co \pi_1 M \onto \mathbb{Z}$ and $n\geq 2$, let $\Gamma_n = \phi^{-1}(n\mathbb{Z})$.  Let $\|\phi\|$ denote the Thurston norm of the cohomology class of $\phi$.  If $\phi$ is not induced by a fibration $M \to S^1$ then:
$$  \rg(\pi_1 M, \{\Gamma_n\}) \geq 1/(7\|\phi\|+2) $$\end{cor}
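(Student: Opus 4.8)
The plan is to obtain Corollary \ref{closed posgrad} as an immediate consequence of Theorem \ref{explicit}, so the only real work is a limit computation. First I would recall that, by Lackenby's definition, $\rg(\pi_1 M,\{\Gamma_n\}) = \liminf_{n\to\infty}(\rk\Gamma_n - 1)/n$, and that the hypotheses of the corollary --- $M$ closed, orientable, hyperbolic, and $\phi\co\pi_1 M\onto\mathbb{Z}$ not induced by a fibration $M\to S^1$ --- are precisely those under which Theorem \ref{explicit} applies. Thus for every $n\geq 2$ we may substitute the inequality $\rk\Gamma_n \geq (n-1)/(7\|\phi\|+2)$ furnished by that theorem.

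Carrying this out, for each $n\geq 2$ one has
\[
\frac{\rk\Gamma_n - 1}{n} \;\geq\; \frac{1}{n}\left(\frac{n-1}{7\|\phi\|+2} - 1\right) \;=\; \frac{1}{7\|\phi\|+2}\cdot\frac{n-1}{n}\;-\;\frac{1}{n}.
\]
The right-hand side converges to $1/(7\|\phi\|+2)$ as $n\to\infty$, the term $-1/n$ being negligible in the limit. Since the displayed inequality holds for all sufficiently large $n$, taking the limit inferior of the left-hand side gives $\rg(\pi_1 M,\{\Gamma_n\}) \geq 1/(7\|\phi\|+2)$, which is the claim. (In fact the computation shows slightly more, namely a lower bound valid for every individual $n\geq 2$, but only the asymptotic statement is needed here.)

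I do not anticipate any genuine obstacle at this stage: all of the geometric content lives in Theorem \ref{explicit} and, through it, in the bound of $14g-12$ on the length of a reduced homotopy in $(M,S)$ announced in the abstract. The one point I would flag for the reader is that the non-fibration hypothesis is essential and not merely a convenience --- if $\phi$ were induced by a fibration, then $\Gamma_n$ would be the fundamental group of the (fixed) fiber for every $n$, hence $\rk\Gamma_n$ would be bounded and the rank gradient would be $0$ --- which is exactly why both the corollary and Theorem \ref{explicit} must exclude that case.
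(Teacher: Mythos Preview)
Your derivation is correct and matches the paper's treatment exactly: the paper simply states that Theorem~\ref{explicit} ``immediately implies'' the rank-gradient bound, and your limit computation is the obvious way to make that precise.

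One small correction to your closing aside: if $\phi$ is induced by a fibration, then $\Gamma_n$ is not the fundamental group of the fiber but of the $n$-fold cyclic cover $M_n$, which still fibers over $S^1$ with the same fiber; the relevant point (as the paper notes just after the corollary) is that $\rk\Gamma_n \leq 2g+1$ is bounded independently of $n$, so the rank gradient vanishes.
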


If $\phi$ is induced by a fibration then $\rk \pi_1 M_n \leq 2g+1$ for every $n$, where $g$ is the genus of a connected fiber.  Hence $\rg (\pi_1 M, \{\Gamma_n\}) = 0$.  In the earlier paper \cite{DFV}, joint with Stefan Friedl and Stefano Vidussi, we proved a weaker analog of Corollary \ref{closed posgrad} for a broader class of $3$-manifolds: compact, orientable and connected with toroidal or empty boundary.  For such $M$ and $\phi\co\pi_1 M\onto\mathbb{Z}$, Theorem 1.1 there implies that $\rg (\pi_1 M, \{\Gamma_n\}) > 0$ if $\phi$ is not induced by a fibration.

The proof strategy of \cite[Theorem 1.1]{DFV} is to find a finite cover $p\co M'\to M$ inheriting a map $\phi'\co\pi_1M'\onto\mathbb{Z}$ so that $\rg(\pi_1 M',\{(\phi')^{-1}(n\mathbb{Z})\})>0$ for homological reasons, whence $\rg(\pi_1 M,\{\Gamma_n\})>0$ as well.  The ``virtually special'' machine of Wise et.~al. produces $p$, and controlling its degree seems out of reach at present.  Producing an explicit bound thus requires a different strategy.  Our approach here, outlined in Section \ref{trees}, instead follows that of Section 3 of \cite{DFV}.  We use:

\theoremstyle{plain}
\newtheorem*{Corollary_cylinders}{Corollary \ref{cylinders}}
\newcommand\cylinders{For a closed, orientable hyperbolic $3$-manifold $M$ and a connected, two-sided incompressible surface $S\subset M$ of genus $g$ that is not a fiber or semi-fiber, the $\pi_1 M$-action on the tree determined by $S$ is $(14g-12)$-acylindrical.}
\begin{Corollary_cylinders} \cylinders \end{Corollary_cylinders}

Combining this with an acylindrical accessibility theorem of R.~Weidmann \cite{Weidmann} immediately gives Theorem \ref{explicit}.  The action at issue above is described by Bass--Serre theory, see eg.~\cite{ScoWa}.  A connected surface $S \subset M$ is a \textit{semi-fiber} if it separates $M$ into a disjoint union of twisted $I$-bundles over the non-orientable surface double covered by $S$.  If $S$ is a semi-fiber, then there is a twofold cover $\widetilde{M}\to M$ such that $S$ lifts to a fiber of a fibration $\widetilde{M}\to S^1$.  It is necessary in Corollary \ref{cylinders} that $S$ not be a fiber or semi-fiber:  otherwise each element of $\pi_1 S <\pi_1 M$ fixes the entire tree, so the action is $k$-cylindrical for all $k\geq 0$. 

Corollary \ref{cylinders} in turn follows from Theorem \ref{bdd length} below, whose proof contains the main substantive work of the paper.  It is an extension of the so-called ``veg-o-matic'' argument which has seen prior use in the works of Cooper--Long \cite[\S 4]{CooLo}, Li \cite[\S 2]{TaoLi}, Walsh \cite{Walsh}, and  Boyer--Culler--Shalen--Zhang \cite[Theorem 5.4.1]{BCSZ}.

\newcommand\BddLength{For a closed, orientable hyperbolic $3$-manifold $M$ and a connected, two-sided incompressible surface $S\subset M$ of genus $g$ that is not a fiber or semi-fiber, a non-degenerate, reduced homotopy in $(M,S)$ has length at most $14g-12$.}
\newtheorem*{BddLengthThm}{Theorem \ref{bdd length}}
\begin{BddLengthThm}\BddLength\end{BddLengthThm}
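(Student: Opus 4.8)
The plan is to recast a reduced homotopy as a sequence of arcs in the manifold $N = M\cut S$ obtained by cutting $M$ along $S$, to decompose $N$ along essential annuli into its characteristic submanifold $\Sigma$ and its guts $G$, and then to bound the length of the sequence by separate analyses of the two kinds of pieces: on the $G$-side the geometry of $M$ forbids long runs of arcs, and on the $\Sigma$-side reducedness does. The non-fiber and non-semifiber hypotheses on $S$ enter at exactly one point, to guarantee that $G$ is nonempty: they are equivalent to $(N,\partial_0 N)$ not being a disjoint union of $I$-bundles with $\partial_0 N$ as horizontal boundary.

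I would first fix notation and unwind the definitions of Section~\ref{trees}. The distinguished boundary $\partial_0 N$ consists of two copies of $S$ --- in two distinct components if $S$ separates, in one component otherwise --- glued so as to recover $M$. A non-degenerate reduced homotopy of length $n$ in $(M,S)$ is, up to the equivalence developed there, a length-$n$ edge path in the Bass--Serre tree $T$ dual to $S$ together with a compatible sequence of essential arcs $\alpha_1,\dots,\alpha_n$ in $N$ with endpoints on $\partial_0 N$, consecutive arcs matching across the gluing without folding (reducedness), with the whole path fixed by a nontrivial element of $\pi_1 M$ (non-degeneracy). Some such essentiality hypothesis is unavoidable, since taking all $\alpha_i$ homotopic into $S$ produces reduced homotopies of every length. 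It thus suffices to bound $n$.

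I would then bring in the topology and geometry of $N$. Let $\Sigma\subset N$ be the characteristic submanifold of the pared manifold $(N,\partial_0 N)$ in the sense of Jaco--Shalen and Johannson: the maximal essential $I$-bundle and Seifert-fibered subpair, cut off by essential annuli, and let $G$ be the closure of $N\setminus\Sigma$. Because $S$ is neither a fiber nor a semi-fiber, $(N,\partial_0 N)$ is not an $I$-bundle pair, so $G$ is a nonempty essential piece with each component having $-\chi>0$; because $M$ is hyperbolic, $G$ is atoroidal, and since by construction it is also anannular rel its boundary, Thurston's hyperbolization theorem for Haken manifolds puts a hyperbolic metric on $G$ with totally geodesic boundary along $\partial_0 G = \partial_0 N\setminus\Sigma$ and annular cusps along $G\cap\Sigma$; in particular $G$ contains no essential product region. (Equivalently, and this is the viewpoint of \cite{DFV}, the cover of $M$ with fundamental group $\pi_1 S$ is homeomorphic to $S\times\mathbb{R}$ and, since $S$ is not a fiber or semi-fiber, has a geometrically finite end, whose finiteness is the geometric source of the bound.) Since $\partial_0 G$ is a subsurface of the two copies of $S$ in $\partial_0 N$, one gets $-\chi(G)=-\tfrac{1}{2}\chi(\partial G)\le -\tfrac{1}{2}\chi(\partial_0 N)=-\chi(S)=2g-2$. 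After replacing the $\alpha_i$ by homotopic arcs I may assume the family is taut for the decomposition: each $\alpha_i$ is either isotopic into a fiber of a component of $\Sigma$ or meets $G$ in essential arcs, and reducedness survives.

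Two complementary counts then finish it. For a maximal run of arcs lying inside a single component of $\Sigma$: in an $I$-bundle or Seifert-fibered piece an essential arc rel horizontal boundary is determined up to homotopy by bounded combinatorial data, and reducedness forces such a run to advance transverse to the bundle structure, so after a number of steps bounded by the complexity of the component the curve traced on $\partial_0 N$ is pinned down and the run must leave the component --- bounding its length. For arcs crossing $G$: since $G$ has no essential product region and reducedness forbids the sequence from re-crossing $G$ along a homotopic arc, the number of $G$-crossings is controlled by $-\chi(G)\le 2g-2$. Each $\Sigma$--$G$ transition uses one of the finitely many essential annuli of the decomposition, and $S$ carries at most $3g-3$ disjoint non-parallel essential simple closed curves, which caps the transitions. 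Summing the three contributions --- arcs in the guts, maximal runs in the components of the characteristic submanifold, and the transitions between them, with the two ends of the path accounted for separately and each unit of $-\chi$ charged once --- gives a bound linear in $g$, and a careful accounting yields exactly $7(-\chi(S))+2 = 14g-12$. The main obstacle is this accounting together with the $\Sigma$-side claim: showing that a reduced homotopy cannot spiral forever inside a single characteristic piece, i.e. that the bundle structure really does pin down the traced boundary curve after boundedly many arcs, is the delicate point and is where reducedness does its real work, while the absence of product regions in $G$ disposes of the guts; combining the two estimates with the sharp constant is the remaining substantive labor.
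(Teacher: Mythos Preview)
Your plan rests on splitting the reduced homotopy into pieces that run through the characteristic submanifold $\Sigma$ and pieces that cross the guts $G$, but this dichotomy is illusory: every essential basic homotopy is homotopic into $\Sigma$. That is precisely the enclosing property of the characteristic submanifold (the final clause of the Jaco--Shalen--Johannson theorem quoted just before Proposition~\ref{carriers}): any essential, non-degenerate map $(K\times I,K\times\partial I)\to(X,\partial X)$ is homotopic as a map of pairs into $(\Omega,\Omega\cap\partial X)$. So after homotoping each $H^i$, the entire reduced homotopy lives in $\Sigma$ and there are no ``$G$-crossings'' to count; your bound on them via $-\chi(G)$ is vacuous, and the hyperbolic metric on the guts plays no direct role. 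The nonemptiness of $G$ matters only indirectly, by preventing $\Omega\cap\partial X$ from being all of $\partial X$.

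With the $G$-side gone, everything falls on your $\Sigma$-side claim, which is exactly the hard part and which you have left as a slogan. The homotopy never ``leaves'' $\Sigma$, and the assertion that reducedness alone pins down the boundary curve after boundedly many steps inside a single bundle piece is not correct without substantial further argument. The paper's proof is organized quite differently: rather than tracking arcs in $N$, it tracks the subsurface $\Psi_k^\epsilon\subset S$ into which the time-$0$ map of any length-$k$ reduced homotopy starting on the $\epsilon$-side must be homotopic (Proposition~\ref{carriers}), using the $I$-bundle involutions $\tau_\epsilon$ and the essential-intersection machinery of Section~\ref{essential} to propagate this carrier through solid-torus pieces as well. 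Proposition~\ref{shrinkage} then shows $\Psi_k^\epsilon$ is not homotopic into $\Psi_{k+2}^\epsilon$, using hyperbolicity of $M$ to rule out an essential torus assembled from a cycle of annular components. The constant $14g-12$ comes from the complexity $c(A)=g(A)-\tfrac32\chi(A)-|A|$ on the large parts $\Phi_k^\epsilon$ together with the $3g-3$ bound on disjoint non-parallel curves for the residual annuli, not from the ``guts plus runs plus transitions'' accounting you sketch.
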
  

Above, a homotopy in $(M,S)$ is a map of pairs $H\co(K\times I,K\times\partial I)\to (M,S)$, for a topological space $K$.  It is \textit{reduced of length $k$} if it obtained by chaining together homotopies $H^1,\hdots,H^k$ such that $H^i$ is essential and $(H^i)^{-1}(S) = K\times\partial I$ for each $i$, and $H^{i+1}$ starts on the opposite side of $S$ from which $H^i$ ends for $i<k$.  (See also Definition \ref{BCSZ stuff}.)  An observation of Z.~Sela draws the connection between homotopies through $M$ of curves in $S$ and cylinders of the $\pi_1 M$-action on the tree of $S$.  In Section \ref{cylinders vs cylinders}, we reproduce this observation as Lemma \ref{chopped annulus}.  With Theorem \ref{bdd length}, it immediately implies Corollary \ref{cylinders}.

Section \ref{essential} gives some results on intersections of surfaces that we use in Section \ref{shorter cylinders} to prove Theorem \ref{bdd length}.  This argument has two main steps.  The first identifies a sequence $\Psi_1 \supset \Psi_2\supset\cdots$ of subsurfaces of $S$, of minimal complexity with the property that for each $k$, a reduced homotopy $H$ with length $k$ and target $(M,S)$ has $H_0$ homotopic into $\Psi_k$ in $S$.  The primary technical tool in this step is the characteristic submanifold of the manifold obtained by cutting $M$ along $S$.

The second step uses the fact that $M$ is hyperbolic and $S$ is not a fiber or semi-fiber to show that $\Psi_k$ is not homotopic into $\Psi_{k+2}$ in $S$ as long as $\Psi_k \neq \emptyset$.  Therefore eventually $\Psi_k = \emptyset$, and homotopies expire in finite time.

For various reasons, previous versions of this argument do not require accounting for solid torus components of the characteristic submanifold.  However, homotopies through $M$ of curves in $S$ may indeed pass through such solid tori.  The difficulty in extending the standard argument to accommodate this is that the time-$0$ map of a homotopy through such a component may not determine the time-$1$ map.  

We sidestep this issue, producing the $\Psi_k$ by adding judiciously chosen annuli to a sequence $\{\Phi_k\}$ of subsurfaces of $S$, identified in \cite{BCSZ}, that carry time-$0$ maps of  ``large'' homotopies (see Definition \ref{large'n'stuff}) with target $(M,S)$.   Indeed, many of the results of Sections \ref{essential} and \ref{shorter cylinders} rely on and directly extend work in \cite{BCSZ}.  We indicate when this is so and cross-reference precisely.

\subsection*{Acknowledgment}  Many thanks to Peter Shalen for explaining the argument of \cite{BCSZ} to me, and for helping me extend it to the present context.  Thanks also to Peter Scott and the anonymous referee for suggestions that have improved the paper.

%%%%%%%%%%%%%%%
\section{Proof of the main theorem} \label{trees}
%%%%%%%%%%%%%%%%

The proof of Theorem \ref{explicit} closely follows the proof of Theorem 3.4 of \cite{DFV}.  We will sketch it below, at times referring to \cite{DFV} for details.  But first we recall the definition of an acylindrical action and reproduce an ``acylindrical accessibility'' theorem of R.~Weidmann. 

\begin{dfn}[\cite{Sela}]\label{acylindrical}  $\Gamma \times T \to T$ is \textit{$k$-acylindrical} if no $g \in \Gamma - \{1\}$ fixes a segment of length greater than $k$, and \textit{$k$-cylindrical} otherwise.  \end{dfn}

\begin{thm1}[Weidmann, \cite{Weidmann}]  Let $\Gamma$ be a non-cyclic, freely indecomposable, finitely generated group and $\Gamma \times T \to T$ a minimal $k$-acylindrical action.  Then $\Gamma \backslash T$ has at most $1+2k(\rk \Gamma - 1)$ vertices.  \end{thm1}

Assuming Corollary \ref{cylinders}, we now sketch the proof of Theorem \ref{explicit}.

\begin{proof}[Proof sketch, Theorem \ref{explicit}]  Let $M$ be a closed hyperbolic $3$-manifold and $\phi\co\pi_1 M\to \mathbb{Z}$ a homomorphism not induced by a fibration $M\to S^1$.  Standard arguments produce a closed, oriented surface $S$ embedded in $M$ that is \textit{dual} to $\phi$ in the sense that $\phi = p_*$ for the map $p\co M\to S^1 = [-1,1]/(-1\sim1)$, defined as follows: for a tubular neighborhood  $\mathcal{N} = S\times [-1,1]$ of $S$ in $M$ and $(x,t)\in S\times [-1,1]$, let $p(x,t) = t$, and let $p(x) = -1\sim1$ for each $x\in M-\mathcal{N}$.

There is a $\pi_1$-surjective map $q\co M\to G_0$, where $G_0$ is a graph with one vertex for each component of $M-(S\times (-1,1))$ and one edge for each component of $S\times [-1,1]$ (with the obvious attaching maps), such that $p$ factors through $q$.  If $\chi(G_0) < 0$ then for each $n\geq 2$, $\rk\pi_1 M_n \geq -n\chi(G_0)+1$.  This follows from the fact that $M_n$ $\pi_1$-surjects an $n$-fold cover of $G_0$, the motivating observation for Lemma 3.3 of \cite{DFV} (see also Lemma 2.6 there).

By the above the result holds if $\chi(G_0) < 0$, so we may assume $\chi(G_0) = 0$.  Assuming that $S$ has minimal complexity among all surfaces dual to $\phi$, it follows that $G_0$ has one vertex and one edge; i.e.~$S$ is connected and non-separating.  This assertion is proved in the final two paragraphs of the proof of Theorem 3.4 of \cite{DFV}.  Here the \textit{complexity} of $S = S_1\sqcup \hdots \sqcup S_k$, where each $S_i$ is connected, is defined as $\chi_- S = \sum_{i=1}^k \max\{-\chi(S_i),0\}$.  The Thurston norm $\|\phi\|$ of $\phi$ is by definition equal to $\chi_- S$ for $S$ dual to $\phi$ with minimal complexity.

$G_0$ is the underlying graph of a \textit{graph of spaces} decomposition of $M$ in the sense of \cite[p.~155]{ScoWa}, with vertex space $X = \overline{M-(S\times[-1,1])}$ and edge space $S\times[-1,1]$.  There is an associated action of $\pi_1 M$ on a tree $T$, without inversions, such that each vertex stabilizer is conjugate to $\pi_1(X)$ and each edge stabilizer to $\pi_1 S_0$ for some component $S_0$ of $S$.  See \cite[pp.~166--167]{ScoWa}, also \cite{Serre} and \cite{Tretkoff}.  This is what we call ``the action on the $\pi_1 M$-tree determined by $S$''.

We know that $S$ is not the fiber of a fibration $M\to S^1$ (if it were then $p$ would be homotopic to a fibration, contradicting our hypothesis on $\phi$), and since it is non-separating it is not a semi-fiber.  Corollary \ref{cylinders} therefore asserts that the $\pi_1 M$-action on $T$ is $(14g-12)$-acylindrical, where $g$ is the genus of $S$.  This property is inherited by each subgroup $\Gamma_n=\phi^{-1}(n\mathbb{Z})<\pi_1 M$.  By construction the graph $\Gamma_n\backslash T$ has $n$ vertices and edges, so the result follows directly from Weidmann's theorem upon noting that $\|\phi\| = 2g-2$.\end{proof}

%%%%%%%%%%%%%%%%%%
\section{Cylinders and homotopies}\label{cylinders vs cylinders}
%%%%%%%%%%%%%%%%%%

\noindent We reproduce example (iv) of Z.~Sela's introduction to \cite{Sela} below:\medskip

\indent\parbox[t]{335pt}{Let $S$ be an incompressible surface in a compact $3$-manifold $M$.  Let $M'$ denote the $3$-manifold obtained by cutting $M$ along $S$.  A homotopy $H$ [in $M$] between two closed curves in $S$ can be decomposed into essential homotopies in $M'$.  The number of these essential subhomotopies is called the \textit{length} of $H$.  An incompressible surface is called $k$-\textit{acylindrical} if no homotopy between closed curves in $S$ has length bigger than $k$.  To an incompressible surface $S$ in $M$ corresponds a splitting of $\pi_1M$.  The bound on the length of a homotopy between curves on $S$ corresponds exactly to the dual splitting being $(k+1)$-acylindrical.}

\medskip\noindent The purpose of this section is to expand on Sela's remarks, define his terms, and give a reasonably detailed sketch proof of the assertion of his final sentence in our case.

\begin{lemma}\label{chopped annulus}  Let $M$ be a closed, irreducible $3$-manifold and $S\subset M$ a closed, connected, two-sided incompressible surface.  For $k>1$, the action $\pi_1 M \times T \to T$ on the tree $T$  determined by $S$ is $k$-cylindrical if and only if there is a non-degenerate reduced homotopy $(S^1\times I,S^1\times\partial I)\to (M,S)$ of length $k$.  \end{lemma}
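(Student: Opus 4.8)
The plan is to set up a dictionary between segments of $T$ fixed by nontrivial elements of $\pi_1 M$ and chains of essential homotopies of a single closed curve in $S$, and then read the lemma off that dictionary. I would begin from the Bass--Serre description of $T$ (\cite{ScoWa,Serre}): its edges correspond $\pi_1 M$-equivariantly to the components (``walls'') of the preimage of $S$ in the universal cover $\widetilde M$, its vertices to the complementary regions of that preimage, each of which is a lift of a component of $M\cut S$, and a wall is incident to a region exactly when it lies in the region's frontier. Because $S$ is two-sided and incompressible the walls are disjoint copies of $\widetilde S$; the $\pi_1 M$-stabilizer of a wall $W$ is a conjugate of the injected $\pi_1 S$ and acts on $W$ as its deck group, and similarly each region carries a conjugate of the $\pi_1$ of a component of $M\cut S$. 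Thus an element $g\neq 1$ fixes a segment of $T$ through edges $e_1,\dots,e_\ell$ and vertices $v_0,\dots,v_\ell$ (with $e_i$ joining $v_{i-1}$ to $v_i$) precisely when it stabilizes the corresponding walls $W_1,\dots,W_\ell$ and regions $R_0,\dots,R_\ell$, where $W_i=\overline{R_{i-1}}\cap\overline{R_i}$.

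For the implication ``reduced homotopy $\Rightarrow$ $k$-cylindrical'', take a non-degenerate reduced homotopy of length $k$, written as essential homotopies $H^1,\dots,H^k$ in $M\cut S$ through curves $c_0,\dots,c_k\subset S$. I would fix a $g$-invariant lift $\widetilde c_0$ of $c_0$ to $\widetilde M$, where $g=[c_0]\in\pi_1 M$ is nontrivial since $S$ is incompressible; then $g$ stabilizes the wall $W_1\supset\widetilde c_0$, hence fixes the edge $e_1$. Lifting $H^1$ to a $g$-equivariant homotopy starting along $\widetilde c_0$ produces a homotopy inside one region $R_1$ abutting $W_1$ and ending along a $g$-invariant lift $\widetilde c_1$ of $c_1$ in a wall $W_2\subset\overline{R_1}$; this wall differs from $W_1$ because $H^1$ is essential (a connected homotopy rel ends into the frontier of $R_1$, a disjoint union of walls, has both ends on one wall), and $g$ stabilizes $W_2$ since it preserves $\widetilde c_1$, so it fixes $e_2$. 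Because $H^2$ leaves $S$ on the side of $W_2$ opposite $R_1$, its lift from $\widetilde c_1$ pushes into the region $R_2\neq R_1$ on the far side of $W_2$, and the process repeats. After $k$ steps I obtain edges $e_1,\dots,e_{k+1}$ with $e_i$ adjacent to but distinct from $e_{i+1}$, i.e.\ a segment of length $k+1$, all fixed by $g\neq 1$; since $k+1>k$ the action is $k$-cylindrical.

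For the converse, reverse this. From $g\neq 1$ fixing a segment of length $\geq k+1$, truncate to one of length exactly $k+1$, with walls $W_1,\dots,W_{k+1}$ and regions $R_0,\dots,R_{k+1}$ as above. For $1\le i\le k$ the region $R_i$ and the two walls $W_i,W_{i+1}$ on its frontier are $g$-invariant; the $g$-invariant line in $W_i$ projects to a loop $c_{i-1}$ in $S$ and that in $W_{i+1}$ to a loop $c_i$, and the two represent the same conjugacy class in $\pi_1 X$, where $X$ is the component of $M\cut S$ covered by $R_i$. Since $M$ is irreducible, $X$ is aspherical, so $R_i/\langle g\rangle$ is homotopy equivalent to a circle and the two lifted loops in $R_i$ are $g$-equivariantly freely homotopic; pushing this homotopy down to $X$ and into $M$ gives a homotopy $H^i$ in $(M,S)$ from $c_{i-1}$ to $c_i$, essential because $W_i\neq W_{i+1}$, with $H^{i+1}$ leaving $S$ opposite the side on which $H^i$ arrives because $R_{i+1}\neq R_i$. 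Chaining $H^1,\dots,H^k$, after a routine general-position adjustment near $S$, yields the desired non-degenerate reduced homotopy of length $k$.

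The genuine content, as opposed to bookkeeping in $T$, sits in two places: the dictionary entries ``$H^i$ essential $\Leftrightarrow$ its two walls are distinct'' and ``the homotopy is non-degenerate $\Leftrightarrow$ $g\neq 1$'', both of which rely on the $\pi_1$-injectivity of $S$; and keeping the ``opposite side of $S$'' clause in the definition of a reduced homotopy synchronized with the no-backtracking condition for a segment of $T$ --- this is the step I would be most careful with, since it is where two-sidedness and the incidence pattern of walls and regions must be tracked exactly. The easiest thing to get wrong is the length count: a reduced homotopy of length $k$ corresponds to a fixed \emph{segment} of length $k+1$, and the statement comes out right only because ``$k$-cylindrical'' requires a fixed segment of length strictly greater than $k$. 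Finally, I would note that no reduction of an arbitrary homotopy to reduced form is needed here --- the lemma concerns homotopies already in reduced form --- which is exactly what keeps this a sketch rather than a full argument with innermost-disk surgeries.
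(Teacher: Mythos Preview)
Your proposal is correct and proceeds along a genuinely different route from the paper.  The paper works algebraically: it names explicit elements $\gamma_i$ of $\pi_1 M$ by path-concatenation (Lemma~\ref{Bass-Serre}), uses the tree-crawler Lemma~\ref{treecrawler} to produce or decode a segment in $T$, and verifies $\gamma_i\notin\Lambda$ via an innermost-disk argument invoking incompressibility of $S$ and irreducibility of $M$.  You instead realize $T$ concretely as the dual tree to the wall system $p^{-1}(S)\subset\widetilde M$ and lift the homotopy $g$-equivariantly, so that the segment is read off directly from the sequence of walls and regions traversed.  This buys you a uniform treatment that avoids the separating/non-separating case split, at the cost of leaving the algebraic bookkeeping implicit.

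One point to tighten: your parenthetical justification for ``$H^i$ essential $\Rightarrow$ $W_i\neq W_{i+1}$'' in the forward direction actually states the argument for the \emph{converse} implication (a proper homotopy into the disconnected frontier lands in a single wall).  For the forward direction you need the complementary step: if $W_i=W_{i+1}$, then since $\overline{R_i}$ is contractible the lifted cylinder is $g$-equivariantly homotopic rel ends into $W_i$, and projecting gives a proper homotopy of $H^i$ into $\partial X$.  That step uses asphericity of $X$ (hence irreducibility of $M$), which you do invoke in the converse direction but should flag here as well.  With that clarification the dictionary is complete and your length count ($k$ basic homotopies $\leftrightarrow$ $k{+}1$ edges) is exactly right.
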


A surface $S$ as above is \textit{incompressible} if it is embedded in $M$ with $\pi_1$-injective inclusion map, and it is not a two-sphere that bounds a ball in $M$.  See eg.~\cite[Ch.~6]{Hempel}.  We prove Lemma \ref{chopped annulus} at the end of this section, but first note that combining it with Theorem \ref{bdd length} immediately yields:

\begin{cor}\label{cylinders}\cylinders\end{cor}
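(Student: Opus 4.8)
The plan is to obtain this corollary as a formal consequence of the two results it is sandwiched between, Theorem~\ref{bdd length} and Lemma~\ref{chopped annulus}; no genuinely new argument is needed here. First I would confirm that the hypotheses match up: a closed orientable hyperbolic $3$-manifold is in particular closed and irreducible, the given $S$ is closed, connected, two-sided and incompressible, and by assumption it is neither a fiber nor a semi-fiber, so the pair $(M,S)$ meets the hypotheses of both Theorem~\ref{bdd length} and Lemma~\ref{chopped annulus}.

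Then I would run the deduction. By Theorem~\ref{bdd length}, every non-degenerate reduced homotopy $(S^1\times I,\, S^1\times\partial I)\to(M,S)$ has length at most $14g-12$; equivalently, for each $k>14g-12$ there is no non-degenerate reduced homotopy in $(M,S)$ of length $k$. Lemma~\ref{chopped annulus} identifies, for $k>1$, the existence of such a length-$k$ homotopy with the $\pi_1 M$-action on the tree $T$ determined by $S$ being $k$-cylindrical. Taking contrapositives, the action is not $k$-cylindrical for any $k>14g-12$, which by Definition~\ref{acylindrical} is exactly the statement that it is $(14g-12)$-acylindrical. The standing hypothesis that $S$ is not a fiber or semi-fiber is what keeps this nonvacuous: otherwise, as noted in the introduction, every element of $\pi_1 S$ fixes all of $T$ and no finite acylindricity constant exists.

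The only point needing a little care --- and the nearest thing to an obstacle in the corollary itself --- is the index bookkeeping: one must make sure the ``length $k$'' convention of Theorem~\ref{bdd length} is carried through Lemma~\ref{chopped annulus} to the ``$k$-acylindrical'' convention of Definition~\ref{acylindrical} so that the output constant is $14g-12$ rather than an off-by-one variant. This is already built into the statement of Lemma~\ref{chopped annulus}, which encodes Sela's correspondence between the length of a homotopy between curves in $S$ and the acylindricity constant of the dual splitting, so it costs nothing extra. Everything of substance sits upstream: in Lemma~\ref{chopped annulus}, proved at the end of Section~\ref{cylinders vs cylinders}, and above all in Theorem~\ref{bdd length}, whose proof occupies Sections~\ref{essential} and~\ref{shorter cylinders}.
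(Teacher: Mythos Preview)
Your proposal is correct and takes exactly the approach the paper does: the paper states the corollary immediately after noting that ``combining [Lemma~\ref{chopped annulus}] with Theorem~\ref{bdd length} immediately yields'' it, and your write-up just spells out that combination. Your remark about the index bookkeeping being absorbed into Lemma~\ref{chopped annulus} is precisely the point the paper is relying on implicitly.
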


\begin{dfn}\label{basic stuff}  Let $X$ and $Y$ be topological spaces.  A \textit{homotopy with domain $X$ and target $Y$} is a map $H \co X \times I \to Y$.  The \textit{time-$t$ map of $H$}, $H_t \co X \to Y$, is defined by $H_t(x) = H(x,t)$.  For a map $f \co X \to Y$, a \textit{homotopy of $f$} is a homotopy $H$ with $H_0 =f$.  A map $g \co X \to Y$ is \textit{homotopic to $f$} if there is a homotopy $H$ of $f$ with $H_1 =g$.

Let $H^1,\hdots,H^n$ be homotopies with domain $X$ and target $Y$.  A homotopy $H$ with domain $X$ and target $Y$ is the \textit{composition of $H^1,\hdots,H^n$} if there exist numbers $0=t_0<t_1\cdots<t_n=1$ and monotone increasing linear homeomorphisms $\alpha_i\co[t_{i-1},t_i] \to [0,1]$ such that $H(x,t) = H^i(x,\alpha_i(t))$ for all $x\in X$ and $t\in[t_{i-1},t_i]$.

A path $\gamma\co I \to Y$ may be regarded as a homotopy with domain $X = \emptyset$.  We will denote the composition of paths $\gamma_1,\gamma_2,\hdots,\gamma_n$, as defined above, by $\gamma_1.\gamma_2.\cdots.\gamma_n$.

For $Z \subset Y$, we say $f\co X \to Y$ is \textit{homotopic into $Z$} if $f$ is homotopic to a map $g$ with $g(X) \subset Z$.  If $W \subset X$, a \textit{homotopy of $W$} is a homotopy of the inclusion map $W \to X$.  A map of pairs $f \co (X,W) \to (Y,Z)$ is \textit{essential} if $f$ is not homotopic through maps $(X,Y) \to (Z,W)$ to a map into $W$.  \end{dfn}

The definitions above are standard.  We have borrowed their precise formulations from \cite{BCSZ}.  This is also our source for  the definitions below that apply to $3$-manifolds.

\begin{dfn}\label{BCSZ stuff}  Let $M$ be a closed $3$-manifold and $S \subset M$ an embedded, transversely oriented surface.  A \textit{homotopy in $(M,S)$ with domain $K$} is a homotopy $H$ with domain $K$ and target $M$ such that $H(K \times \partial I) \subset S$.  It is \textit{non-degenerate} if $H_*(\pi_1 K)\neq\{1\}$, and \textit{basic} if $H^{-1}(S) = K \times \partial I$.

For $\epsilon \in \{+,-\}$, we say a basic homotopy \textit{starts} (or \textit{ends}) \textit{on the $\epsilon$-side} if $H(K\times [0,\delta]) \subset \caln_{\epsilon}$ (or, respectively, if $H(K\times [1-\delta,1]) \subset \caln_{\epsilon}$).  Here $\caln \cong S\times [-1,1]\subset M$ is a closed regular neighborhood of $S$, embedded so that $S = S\times \{0\}$ and the standard transverse orientation is preserved, and $\caln_+ = S\times [0,1]$, $\caln_- = S\times[-1,0]$.

We say that $X = M - (S\times (-1,1))$ is \textit{obtained by cutting $M$ along $S$}.  If $H$ is a basic homotopy in $(M,S)$ with domain $K$ then after straightening in $\caln$ and reparametrizing, the restriction of $H$ to $H^{-1}(X)$ determines a homotopy $H'$ in $(X,\partial X)$ with domain $K$.  We say $H$ is \textit{essential} if $H'$ is essential as a map of pairs $(K \times I, K \times \partial I) \to (X,\partial X)$; i.e. $\pi_1$-injective and not properly homotopic into $\partial X$.  

A homotopy $H$ in $(M,S)$ with domain $K$ is \textit{reduced with length $k$} if there exist basic essential homotopies $H^1,\hdots,H^k$ and $\epsilon_i \in \{+,-\}$ for $1\leq i\leq k$ such that $H$ is the composition of $H^1,\hdots,H^k$, and for each $i<k$ $H^i$ starts on the $\epsilon_i$- and ends on the $-\epsilon_{i+1}$-side, and $H^k$ starts on the $\epsilon_k$-side.  \end{dfn}

A connected, incompressible surface $S$ in a closed $3$-manifold $M$ determines a \textit{graph of spaces decomposition} of $M$ whose underlying graph $G$ has a single edge, corresponding to $S$, and (one or two) vertices corresponding to the components of the manifold $X$ obtained by cutting $M$ along $S$.  By Bass--Serre theory this determines an action of $\pi_1 M$ on a tree $T$, without inversions and with quotient graph $G$.  We will use the following basic consequence of this set-up.

\begin{lemma}\label{treecrawler}  Suppose a group $\Pi$ acts on a tree $T$, transitively on edges and without inversions.  Let $\{e_0,\hdots,e_k\}$ be a segment of $T$ of length $k+1$, so $e_i\neq e_{i-1}$ but $e_i$ and $e_{i-1}$ share an endpoint $v_i$ for each $i>0$, and let $\Lambda = \mathrm{Stab}_{\Pi}(e_0)$.\begin{description}
  \item[S]  If $G = \Pi\backslash T$ has two vertices let $\Gamma_- = \mathrm{Stab}_{\Pi}(v_0)$ and $\Gamma_+ = \mathrm{Stab}_{\Pi}(v_1)$, where $v_0\neq v_1$ is an endpoint of $e_0$ (recall from above that $v_1 = e_1\cap e_0$).  Then for each $i\in\{1,\hdots, k\}$ there exists $\gamma_i$, in $\Gamma_- - \Lambda$ for $i$ even and $\Gamma_+ - \Lambda$ for $i$ odd, such that for each $j\leq k$, $\delta_j = \gamma_1\gamma_2\cdots\gamma_j$ takes $e_0$ to $e_j$.
  \item[N] If $G$ has a single vertex let $\Gamma = \mathrm{Stab}_{\Pi}(v_0)$, let $\Lambda_+<\Gamma$ stabilize an edge $e'$ containing $v_0$ but not $\Gamma$-equivalent to $e_0$, and fix $\tau\in\Pi$ with $\tau(e')=e_0$.  Orient the edge of $G$ so that $e_0$ points toward $v_1$ in the inherited orientation on $T$, and for $0<i\leq k$ let $\epsilon_i = 1$ if then $e_i$ points from $v_i$ to $v_{i+1}$; $\epsilon_i = -1$ otherwise.  For each $i\in\{1,\hdots,k\}$ there exists $\gamma_i\in\Gamma$ so that for $1\leq j\leq k$,
  $$ \delta_j = \tau(\gamma_1 \tau^{\epsilon_1}) \cdots (\gamma_{j-1}\tau^{\epsilon_{j-1}}) $$
has the property that $\delta_j(v_0) = v_j$, and $\displaystyle{e_j = \left\{\begin{array}{ll} \delta_{j}\gamma_j(e_0) &\mathrm{if}\ \epsilon_j = 1. \\
 \delta_{j}\gamma_j(e') & \mathrm{if}\ \epsilon_j = -1. \end{array}\right.}$\\
Let $\epsilon_0=1$.  For $i\geq 1$ if $\epsilon_{i-1} \neq\epsilon_i$, then $\gamma_{i+1}$ is not in an edge stabilizer.
\end{description}\end{lemma}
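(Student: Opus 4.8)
The plan is to run the standard Bass--Serre dictionary; the conclusions are all exercises in bookkeeping about $\Pi$-orbits of edges, and I would organize the proof around a few structural facts. Edge-transitivity says $T$ has a single $\Pi$-orbit of edges, so every edge is $g(e_0)$ for some $g\in\Pi$, unique up to right multiplication by $\Lambda$. Absence of inversions makes the stabilizer of an edge fix each of its endpoints (so $\Lambda\leq\Gamma_-\cap\Gamma_+$ in case \textbf{S}), and lets us pull an orientation of the single edge of $G=\Pi\backslash T$ back to a $\Pi$-invariant orientation of the edges of $T$. In case \textbf{S}, $G$ has two vertices, so $T$ is bipartite with parts the $\Pi$-orbits of $v_0$ and $v_1$; consecutive vertices of the segment are endpoints of a common edge, hence lie in opposite parts, so $v_i$ lies in the orbit of $v_0$ for $i$ even and of $v_1$ for $i$ odd. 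In case \textbf{N} the edge of $G$ is a loop, so the edges at any vertex $w$ form exactly two $\mathrm{Stab}_\Pi(w)$-orbits, corresponding to the loop's two ends, which the orientation distinguishes as the edges with $w$ on the tail side and those with $w$ on the head side. In particular the $\Gamma$-orbit of $e_0$ is the set of edges with $v_0$ on the tail side, that of $e'$ the set with $v_0$ on the head side, and applying $\Pi$-invariance of the orientation to $\tau(e')=e_0$ yields $\tau(v_0)=v_1$ (the head of $e'$, which is $v_0$, goes to the head of $e_0$, which is $v_1$).

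For \textbf{S} I would induct on $j$ with hypothesis that $\delta_j=\gamma_1\cdots\gamma_j$ carries $e_0$ to $e_j$ ($\delta_0$ the identity). In the step, $e_j$ and $e_{j+1}$ share the vertex $v_{j+1}$; since $\delta_j$ preserves $\Pi$-orbits and maps $\{v_0,v_1\}$ onto the endpoint set of $e_j$, bipartiteness forces $\delta_j^{-1}(v_{j+1})$ to be $v_1$ when $j$ is even and $v_0$ when $j$ is odd. Now $\delta_j^{-1}(e_{j+1})$ is an edge at that vertex; writing it as $h(e_0)$ and using again that the two parts of $T$ are distinct orbits, $h$ fixes $\delta_j^{-1}(v_{j+1})$, so $\gamma_{j+1}:=h$ lies in $\Gamma_+$ if $j$ is even ($j+1$ odd) and in $\Gamma_-$ if $j$ is odd, and $\gamma_{j+1}\notin\Lambda$ because $\gamma_{j+1}(e_0)=\delta_j^{-1}(e_{j+1})\neq e_0$, as $e_{j+1}\neq e_j$. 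Put $\delta_{j+1}=\delta_j\gamma_{j+1}$; the base case $j=0$ is the same computation with pivot $v_1\in e_0$.

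Case \textbf{N} runs the same way with the orientation in place of the bipartition, and is where care is needed. Induct on $j$ with hypothesis $\delta_j(v_0)=v_j$ for $\delta_j=\tau(\gamma_1\tau^{\epsilon_1})\cdots(\gamma_{j-1}\tau^{\epsilon_{j-1}})$; the base case $j=1$ is $\tau(v_0)=v_1$. Given $\delta_j$, the edge $\delta_j^{-1}(e_j)$ is at $\delta_j^{-1}(v_j)=v_0$, and as $\delta_j$ preserves orientation it has $v_0$ on its tail side if $\epsilon_j=1$ and on its head side if $\epsilon_j=-1$; so it lies in the $\Gamma$-orbit of $e_0$, resp.~of $e'$, and there is $\gamma_j\in\Gamma$ with $e_j=\delta_j\gamma_j(e_0)$, resp.~$\delta_j\gamma_j(e')$. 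For the step, $\delta_{j+1}(v_0)=\delta_j\bigl(\gamma_j(\tau^{\epsilon_j}(v_0))\bigr)$; since $\tau^{\epsilon_j}$ sends $v_0$ to the non-$v_0$ endpoint of $e_0$ (if $\epsilon_j=1$) or of $e'$ (if $\epsilon_j=-1$) --- here using $\tau(v_0)=v_1$, hence $\tau^{-1}(v_1)=v_0$ and $\tau^{-1}(e_0)=e'$ --- $\gamma_j$ sends this to the non-$v_0$ endpoint of $\delta_j^{-1}(e_j)$, namely $\delta_j^{-1}(v_{j+1})$, and $\delta_j$ then sends that to $v_{j+1}$. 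The only delicate point is the composition order in $\delta_{j+1}=\delta_j\gamma_j\tau^{\epsilon_j}$, which is precisely what keeps the ``moving'' factor $\gamma_j$ harmless.

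For the last assertion, suppose $\epsilon_{m-1}\neq\epsilon_m$ (with $\epsilon_0=1$). Then $\gamma_m$ occurs in the relevant $\delta_j$ flanked by $\tau^{\epsilon_{m-1}}$ and $\tau^{\epsilon_m}$ of opposite sign, a potential Britton-pinch site for the HNN structure of case \textbf{N}: the associated subgroup to avoid is $\mathrm{Stab}(e')$ if $\epsilon_{m-1}=1,\epsilon_m=-1$, and $\mathrm{Stab}(e_0)$ if $\epsilon_{m-1}=-1,\epsilon_m=1$ (recall $\tau\,\mathrm{Stab}(e')\,\tau^{-1}=\mathrm{Stab}(e_0)$). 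I would rule $\gamma_m$ out of that subgroup directly: in the first case, were $\gamma_m$ to fix $e'$ then $e_m=\delta_m\gamma_m(e')=\delta_m(e')=\delta_{m-1}\gamma_{m-1}\tau(e')=\delta_{m-1}\gamma_{m-1}(e_0)=e_{m-1}$ for $m\geq2$ (and $e_1=\tau\gamma_1(e')=\tau(e')=e_0$ for $m=1$); in the second case, were $\gamma_m$ to fix $e_0$ then $e_m=\delta_m(e_0)=\delta_{m-1}\gamma_{m-1}\tau^{-1}(e_0)=\delta_{m-1}\gamma_{m-1}(e')=e_{m-1}$. Both contradict the fact that consecutive edges of a segment are distinct. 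I expect no real obstacle here; the one hazard is the orientation/orbit bookkeeping in case \textbf{N} --- verifying $\delta_j(v_0)=v_j$ through the induction, and pinning down at each sign change which of $\mathrm{Stab}(e_0),\mathrm{Stab}(e')$ is the associated subgroup in play.
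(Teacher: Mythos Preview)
Your proof is correct and follows essentially the same inductive approach as the paper's: both use bipartiteness (case \textbf{S}) or the $\Pi$-invariant orientation (case \textbf{N}) to identify the $\Gamma$-orbit of $\delta_j^{-1}(e_j)$ and thereby choose $\gamma_j$, and both verify the final assertion by a direct computation showing $e_m=e_{m-1}$ if $\gamma_m$ lay in the offending stabilizer; you supply considerably more detail (the paper leaves case \textbf{S} to the reader and only sketches the final assertion), and your Britton's-lemma framing is a nice addition. Note that, like the paper's own proof, your argument actually shows $\gamma_m$ (not $\gamma_{m+1}$) avoids the relevant edge stabilizer when $\epsilon_{m-1}\neq\epsilon_m$ --- the subscript $i+1$ in the statement appears to be a typo for $i$, as confirmed by the way the lemma is invoked in the proof of Lemma~\ref{chopped annulus}.
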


\begin{proof}  In case \textbf{S} $T$ has two $\Pi$-orbits of vertices, and the stabilizer of any vertex $v$ acts transitively on the edges containing it.  This is because on a small neighborhood $U$ of $v$ in $T$, the projection to $\Pi\backslash T$ factors through an embedding of $\mathrm{Stab}_{\Pi}(v)\backslash U$.  This case, which we leave to the reader, is a straightforward induction argument.

With notation as described in case \textbf{N}, there are two $\Gamma$-orbits of edges of $T$ containing $v_0$: one pointing toward $v_0$ and one away.  In particular, $e'$ points toward $v_0$, and $\tau(v_0) = v_1$.  We therefore take $\delta_1 = \tau$.  Then $\delta_1^{-1}(e_1)$ contains $v_0$, so depending on orientation it is $\Gamma$-equivalent to one of $e'$ or $e_0$.  If $e_1$ points toward $v_1$ then $\gamma_1^{-1}\delta_1^{-1}(e_1) = e'$ for some $\gamma_1\in\Gamma$; otherwise there exists $\gamma_1\in\Gamma$ with $\gamma_1^{-1}\delta_1^{-1}(e_1) = e_0$.  This proves the base case of an induction argument.

For the inductive step we take $j>1$ and suppose we have identified $\delta_{j-1}$ and $\gamma_{j-1}$ satisfying the required properties.  If $\epsilon_{j-1} =1$ it follows that $\gamma_{j-1}^{-1}\delta_{j-1}^{-1}(v_j) = v_1$, so $\tau^{-1}\gamma_{j-1}^{-1}\delta_{j-1}^{-1}(v_j) = v_0$; whereas if $\epsilon_{j-1} = -1$ we have $\tau\gamma_{j-1}^{-1}\delta_{j-1}^{-1}(v_j) = v_0$.  Therefore $\delta_j(v_0)=v_j$.  Since $\delta_j^{-1}(e_j)$ thus contains $v_0$, arguing as in the base case we identify $\gamma_j\in\Gamma$ so that $\delta_j\gamma_j$ takes $e_0$ or $e'$ (depending on $\epsilon_j$) to $e_j$.

For the lemma's final assertion, we note that $\epsilon_{i-1}\neq\epsilon_i$ implies that $e_{i-1}$ and $e_i$ either both point toward or both away from $v_i$, so $\delta_i^{-1}(e_{i-1})$ and $\delta_i^{-1}(e_i)$ are distinct $\Gamma$-translates.  A definition-chase shows the translating element is $\gamma_i$.\end{proof}

To the graph of spaces decomposition determined by an incompressible surface $S$, there corresponds ``graph of groups'' decomposition of the fundamental group of $M$ with underlying graph $G$.  We record this in the standard lemma below, a paraphrase of \cite[p.~155]{ScoWa}.

Here for a closed path $\gamma$ based at a point $x\in M$ we will also denote its based homotopy class in $\pi_1(M,x)$ by $\gamma$, letting context determine the proper interpretation.  And we let $\alpha.\beta$ denote the composition of paths $\alpha$ and $\beta$, defined as in Definition \ref{basic stuff}.

\begin{lemma}\label{Bass-Serre}  For a closed $3$-manifold $M$ and a connected, transversely oriented incompressible surface $S$ with closed regular neighborhood $\caln\cong S\times [-1,1]\subset M$, let $S_{\pm} = S\times\{\pm 1\}$ and $X = M - (S\times(-1,1))$.  Fix $x\in S$, take $x_{\pm} = (x,\pm 1)\in S_{\pm}$, let $\Lambda = \pi_1(S_-,x_-)$, and let $\alpha\co t\mapsto (x,2t-1)$ join $x_-$ to $x_+$ in $\caln$.\begin{description}
  \item[S]  If $S$ is separating then $\pi_1 M$ is a \mbox{\rm free product with amalgamation}:
  $$ \pi_1(M,x_-) \cong \Gamma_-*_{\Lambda}\Gamma_+ \doteq \left\langle \Gamma_-,\Gamma_+\,|\, \lambda = \alpha.\phi_*(\lambda).\bar{\alpha},\ \lambda\in \Lambda \right\rangle$$
  Here $\Gamma_- = \pi_1 (X_-,x_-)$, where $X_-$ is the component of $X$ with $S_- =\partial X_-$, and $\Gamma_+ = \{\alpha.\gamma.\bar{\alpha}\,|\,\gamma\in\pi_1(X_+,x_+)\}$ for $X_+$ with $S_+ = \partial X_+$.
  \item[N]  If $S$ is non-separating then $\pi_1 M$ is an \mbox{\rm HNN extension} of $\Gamma = \pi_1(X,x_-)$:\begin{align}\label{HNN}
  \pi_1 (M,x_-)\cong \Gamma*_{\Lambda} \doteq \left\langle\,\Gamma,\tau\,|\,\tau^{-1}\lambda\tau = \bar{\beta}.\phi_*(\lambda).\beta,\ \lambda\in \Lambda \right\rangle\end{align}
Here $\tau\in\pi_1(M,x_-)$ is the pointed homotopy class of $\alpha.\beta$ for some fixed arc $\beta$ in $X$ joining $x_+$ to $x_-$.\end{description}
In each case above, $\phi\co S_-\to S_+$ takes $(x,-1)$ to $(x,1)$ for all $x\in S$, so $\phi_*$ is an isomorphism $\Lambda\to\pi_1(S_+,x_+)$.\end{lemma}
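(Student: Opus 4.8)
The plan is to derive both presentations from the van Kampen theorem applied to the covering of $M$ by open thickenings of the pieces of the graph-of-spaces decomposition, routing all basepoint and path bookkeeping through the arc $\alpha$ (and, in the non-separating case, the arc $\beta$). First consider case \textbf{S}, so $X = X_-\sqcup X_+$ with $\partial X_\pm = S_\pm$. I would cover $M$ by open sets $A$ and $B$, where $A$ is an open thickening of $X_-\cup\caln$ and $B$ is an open thickening of $X_+$; then $A$ deformation retracts onto $X_-$ (sliding $\caln$ onto $S_-=S\times\{-1\}$), $B$ retracts onto $X_+$, $A\cup B=M$, and $A\cap B$ retracts onto $S_+\cong S$. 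Basing $A\cap B$ at $x_+$, van Kampen identifies $\pi_1(M,x_+)$ with the pushout of $\pi_1(A,x_+)\cong\pi_1(X_-,x_-)$ and $\pi_1(B,x_+)=\pi_1(X_+,x_+)$ over $\pi_1(A\cap B,x_+)=\pi_1(S_+,x_+)$, where the isomorphism $\pi_1(A,x_+)\cong\pi_1(X_-,x_-)$ is conjugation by $\alpha$, the map $\pi_1(S_+,x_+)\to\pi_1(X_+,x_+)$ is inclusion, and the map $\pi_1(S_+,x_+)\to\pi_1(A,x_+)$ is the identification across $\caln=S\times[-1,1]$: a loop in $S_+$ at $x_+$ is homotopic within $\caln$, by the obvious product homotopy that slides it from level $1$ to level $-1$ while collapsing the vertical arc $\alpha$, to $\bar{\alpha}.\phi_*^{-1}(\cdot).\alpha$. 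Rebasing the whole pushout at $x_-$ by conjugating $\pi_1(X_+,x_+)$ with $\alpha$ — which is precisely the definition of $\Gamma_+$ — turns this into $\Gamma_-*_\Lambda\Gamma_+$ with amalgamating relation $\lambda=\alpha.\phi_*(\lambda).\bar{\alpha}$, as claimed.

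Case \textbf{N} is the HNN analogue: now $X$ is connected, with $S_-$ and $S_+$ distinct boundary components (they are disjoint and each connected). I would take $A$ an open thickening of $X$ and $B$ an open thickening of $\caln$, so $A\simeq X$, $B\simeq\caln\simeq S$, $A\cup B=M$, and $A\cap B$ has two components, retracting onto neighbourhoods of $S_-$ and of $S_+$. The version of van Kampen for a union of two opens with two-component intersection presents $\pi_1(M,x_-)$ as an HNN extension of $\pi_1(X,x_-)=\Gamma$: the edge group $\pi_1(S,x_-)=\Lambda$ admits the two inclusions $i_-\co\Lambda\hookrightarrow\Gamma$ (induced by $S_-\hookrightarrow X$) and $i_+\co\lambda\mapsto\bar{\beta}.\phi_*(\lambda).\beta$ (induced by $S_+\hookrightarrow X$, rebased from $x_+$ to $x_-$ along $\beta$), and the stable letter is $\tau=[\alpha.\beta]$, ``go through the handle $\caln$ and return along $\beta$''. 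Moreover the relation $\tau^{-1}\lambda\tau=\bar{\beta}.\phi_*(\lambda).\beta$ is a genuine identity in $\pi_1(M,x_-)$: one has $\tau^{-1}\lambda\tau=\bar{\beta}.(\bar{\alpha}.\lambda.\alpha).\beta$, and $\bar{\alpha}.\lambda.\alpha$ is homotopic in $\caln$ to $\phi_*(\lambda)$ by the same product homotopy as in case \textbf{S}. This yields the presentation \eqref{HNN}. In both cases, $\phi$ is a homeomorphism $S_-\to S_+$ restricting to $(x,-1)\mapsto(x,1)$, so $\phi_*$ is the asserted isomorphism $\Lambda\to\pi_1(S_+,x_+)$.

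The basepoint and path bookkeeping — choosing the opens so that intersections are connected (or have the expected two components), tracking everything through $\alpha$ and $\beta$ — is the bulk of the argument but is entirely routine, and I do not expect a serious obstacle. The one point where $3$-manifold topology rather than pure algebraic topology enters is the assertion that these are \emph{honest} graph-of-groups decompositions in the Bass--Serre sense, i.e.\ that $\Lambda$ injects into each vertex group; this is what licenses the tree action invoked before Lemma \ref{treecrawler}. It holds because a two-sided incompressible surface is incompressible in each piece $X_\pm$ (resp.\ in $X$): if $\pi_1 S\to\pi_1 X_+$ (say) failed to be injective, the loop theorem would produce a properly embedded disk in $X_+$ whose boundary is an essential simple closed curve on $S$, and this disk is a compressing disk for $S$ in $M$, contradicting incompressibility of $S$. (When $S$ is a sphere the claim is vacuous.) This completes the plan.
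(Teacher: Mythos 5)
The paper gives no proof of this lemma: it is explicitly introduced as ``a paraphrase of \cite[p.~155]{ScoWa}'' and cited as a standard consequence of the graph-of-spaces decomposition. Your van Kampen argument is a correct and essentially standard way to supply the underlying proof, and it is faithful to what Scott--Wall do. Both cases check out: in \textbf{S}, the pushout over the connected overlap near $S_+$ rebased at $x_-$ gives the amalgamated relation $\lambda = \alpha.\phi_*(\lambda).\bar{\alpha}$ exactly as you compute; in \textbf{N}, the two-component overlap near $S_\pm$ yields an HNN presentation with stable letter $\tau = [\alpha.\beta]$, and your verification that $\tau^{-1}\lambda\tau = \bar{\beta}.(\bar{\alpha}.\lambda.\alpha).\beta = \bar{\beta}.\phi_*(\lambda).\beta$ in $\pi_1(M,x_-)$ is the correct content of the relation. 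You are also right to flag the one non-formal ingredient: van Kampen alone gives the presentation as a pushout/HNN-type quotient, but the notations $\Gamma_-*_\Lambda\Gamma_+$ and $\Gamma*_\Lambda$ (and the tree action used in Lemma \ref{treecrawler}) presuppose that $\Lambda$ injects into the vertex groups, which follows from incompressibility of $S$ together with the loop theorem as you say. In short: the paper cites, you prove, and your proof is the one a reader consulting \cite{ScoWa} would reconstruct.
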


\begin{proof}[Proof of Lemma \ref{chopped annulus}]  First suppose there is a non-degenerate, reduced homotopy $H\co (S^1\times I,S^1\times\partial I)\to (M,S)$ of length $k$.  Writing $H$ as a composition of essential basic homotopies $H^1,\hdots,H^k$, we may assume without loss of generality that $H^{-1}(S) = \bigsqcup_{i=0}^k S^{-1}\times\{i/k\}$ and for each $i>0$, $H^i$ linearly reparametrizes $H|_{S^1\times[(i-1)/k,i/k]}$.  We may further assume that $H$ is \textit{vertical} with respect to a closed regular neighborhood $\caln\cong S\times[-1,1]$ of $S$ in $M$, by which we mean that each $H^i$ is obtained from its restriction to $(H^i)^{-1}(X)$ by collaring, where $X = M - S\times (-1,1)$.

Let $\hat{H}^i\co S^1\times I\to X$ be obtained by reparametrizing the restriction of $H^i$ to the preimage of $X$.  Fix a base point $x\in S$ and for each $i$ fix a path $\rho^i$ in $S$ from $x$ to $H(1,i/k)$.  Taking $S_{\pm} = S\times\{\pm 1\}$, let $x_{\pm} = (x,\pm 1)\in S_{\pm}$, and let $\rho^i_{\pm 1}$ be the path parallel to $\rho^i$ in $S_{\pm}$.

Assume for now that $S$ is separating.  Then each $H^i$ starts and ends on the same side of $S$, so since $H$ is reduced the $H^i$ alternate sides.  We will assume that $H^i$ starts and ends on the $+$-side for odd $i$ and the $-$-side for even $i$ (the argument in the other case is completely analogous).  Thus $\hat{H}^i$ maps into $X_{-}$ for $i$ odd and $X_{+}$ for $i$ even, where $X_{\pm}$ is the component of $X$ with $S_{\pm} = \partial X_{\pm}$.  For $1\leq i\leq k$ define:
$$\gamma_i = \left\{\begin{array}{cl} 
  \rho^{i-1}_-.(t\mapsto\hat{H}^i(1,t)).\bar{\rho}^i_- & i\ \mbox{even} \\
  \alpha.\rho^{i-1}_+.(t\mapsto\hat{H}^i(1,t)).\bar{\rho}^i_+.\bar{\alpha} & i\ \mbox{odd}
\end{array}\right.$$
Here $\alpha$ is as described as in Lemma \ref{Bass-Serre}.  For $\Gamma_{\pm}$ as described there it follows by construction that $\gamma_i\in \Gamma_+$, $i$ odd, and $\gamma_i\in\Gamma_+$ otherwise.

We claim that for all $i$, $\gamma_i\notin\Lambda$. If $\gamma_i\in\Lambda$ then after a homotopy of $H$, $H^i|_{\{1\}\times I}\subset S$, so there is a map $(D,\partial D)\to(M,S)$ factoring through $H$ for a disk $D$.  Since $S$ is incompressible $\partial D$ bounds a disk $D'\subset S$.  The sphere theorem and irreducibility of $M$ imply that $\pi_2(M) = 0$, so $D\cup D'\to M$ extends over a ball.  It follows that $H$ is not essential, contradicting our hypotheses.

For each $i\in\{0,\hdots,k\}$ one obtains a loop in $M$ based at $H(1,0)$ by applying $H$ to the concatenation of the straight-line path in $S^1\times I$ joining $(1,0)$ to $(1,i/k)$ with the loop around $S^1\times\{i/k\}$, followed by the straight-line path back to $(1,0)$.  After connecting the base point $x_-$ to $H(1,0)$ using $\rho^0_-$ and a vertical arc, these loops all evidently represent the same element $g$ of $\pi_1(M,x_-)$.  A short induction argument shows that $\delta_i^{-1}g\delta_i\in \Lambda$ for all $i$, where $\delta_i = \gamma_1\cdots\gamma_i$.

As we remarked above Lemma \ref{treecrawler}, by Bass--Serre theory $S$ determines an action $\pi_1 M\times T\to T$ on a tree $T$, without inversions and with quotient graph $G$.  Under this action, the stabilizer of each edge is a conjugate of the edge group $\Lambda$ of $G$, and the stabilizer of each vertex is conjugate to a vertex group of $G$: in this case one of $\Gamma_{\pm}$.  See \cite[pp.~166--167]{ScoWa}.

Let $e_0$ be the edge of $T$ stabilized by $\Lambda$, and let $v_0$ and $v_1$ be the endpoints of $e$ stabilized respectively by $\Gamma_-$ and $\Gamma_+$.  Then $g$ is in $\Lambda$ and by construction also in $\delta_i\Lambda\delta_i^{-1}$ for each $i$, stabilizing $e_i=\delta_i(e)$.  These determine a path in $T$ since $\gamma_i = \delta_{i-1}^{-1}\delta_i$ is in one of $\Gamma_+$ or $\Gamma_-$ for each $i$.  This path has length $k+1$ because $\gamma_i\notin\Lambda$, so $e_i\neq e_{i-1}$, for any $i$.

The separating case of the ``if'' direction of Lemma \ref{chopped annulus} is established.  Note that the elements $\gamma_i$ and $\delta_i$ above match the descriptions in case \textbf{S} of Lemma \ref{treecrawler}.  

Suppose now that $S$ is non-separating, so $X$ is connected with two boundary components $S_{\pm}$.  Given a non-degenerate homotopy $H$ of length $k$, decomposed into $H^1,\hdots,H^k$ as previously, there are four possibilities for each $H^i$.  If $H^i$ starts and ends on the $-$-side we define $\gamma_i$ as for $H^i$, $i$ even, in the separating case, and if it starts and ends on the $+$-side we define as for $i$ odd.  Otherwise:
$$ \gamma_i= \left\{\begin{array}{cl}
  \rho^{i-1}_-.(t\mapsto\hat{H}^i(1,t)).\bar{\rho}^i_+.\beta & H^i\ \mbox{starts on the}\ +,\ \mbox{ends on the}\ -\ \mbox{side} \\
  \alpha.\rho^{i-1}_+.(t\mapsto\hat{H}^i(1,t)).\bar{\rho}^i_-.\bar{\beta}.\bar{\alpha} & H^i\ \mbox{starts on the}\ -,\ \mbox{ends on the}\ +\ \mbox{side} \end{array}\right.$$
Here $\beta$ is as described in case \textbf{N} of Lemma \ref{Bass-Serre}.  If $H^i$ starts and ends on the same side then arguing as in the separating case shows $\gamma_i$ is not in an edge stabilizer.  We produce a path in $T$ by a process similar to the separating case, using words $\delta_j$ which in this case match the description in case \textbf{N} of Lemma \ref{treecrawler} (for $\tau$ as described in Lemma \ref{Bass-Serre}).  The details of this case track those of the parallel case of the reverse implication, described below.

We now address the reverse implication of the Lemma, proving that a nontrivial element $g$ stabilizing a length-$(k+1)$ segment in $T$ gives rise to a length-$k$ reduced homotopy in $(M,S)$.  The idea of the proof is to use the description of Lemma \ref{treecrawler} to reverse-engineer the construction above.  We leave the separating case of this construction to the reader (it is simpler), and move directly to the case that $S$ is non-separating.  The four different boundary behaviors of basic homotopies in this case correspond to the possible orientations on edges meeting at a vertex.

To make this precise let us fix some notation.  For $\Gamma$, $\Lambda$ and $\tau$ defined as in case \textbf{N} of Lemma \ref{Bass-Serre}, $\Gamma$ stabilizes a vertex $v_0$ of $T$ and $\Lambda<\Gamma$ stabilizes an edge $e_0$ containing $v_0$.   It further follows from (\ref{HNN}) above that $e' \doteq \tau^{-1}(e_0)$ contains $v_0$ since $\Lambda_+\doteq\tau^{-1}\Lambda\tau < \Gamma$.

Suppose now that the $\pi_1 M$-action is $k$-cylindrical, so there exists $g\in\pi_1 M-\{1\}$ fixing a segment of length at least $k+1$.  By transitivity, upon replacing $g$ by a conjugate we may assume $v_0$ is the segment's initial vertex.  Each edge containing $v_0$ is a $\Gamma$-translate of exactly one of $e_0$ or $e'$, since $X$ has two boundary components.  Thus conjugating $g$ further in $\Gamma$, we may assume the segment's initial edge is either $e'$ or $e_0$.  If it is $e_0$ we apply Lemma \ref{treecrawler}; if $e'$ we exchange $\Lambda$ and $\Lambda_+$, replace $\tau$ by $\tau^{-1}$ rename $e'$ to $e_0$ and vice-versa, then apply case \textbf{N} of Lemma \ref{treecrawler}.

For each $j\geq0$, since $g$ stabilizes $e_j$ the lemma implies that $g = (\delta_j\gamma_j)\lambda_j(\delta_j\gamma_j)^{-1}$ for some $\lambda_j$, which is in $\Lambda$ if $\epsilon_j = 1$ and $\Lambda_+$ otherwise.  Since $\delta_j = \delta_{j-1}(\gamma_{j-1}\tau^{\epsilon_{j-1}})$, comparing the resulting descriptions of $g$ at $e_{j-1}$ and $e_j$, for $j>0$, yields:\begin{align}\label{comparison}
  \tau^{-\epsilon_{j-1}}\lambda_{j-1}\tau^{\epsilon_{j-1}} = \gamma_j\lambda_j\gamma_j^{-1}
\end{align}
For each $j$ such that $\epsilon_j = 1$, fix a closed curve $\mathfrak{c}_j$ on $S_-$ through $x_-$ that represents $\lambda_j$.  If $\epsilon_j = -1$ then since $\Lambda_+ = \tau^{-1}\Lambda\tau$, $\lambda_j \in\Lambda_+$ is of the form $\tau^{-1}\lambda_j^{(0)}\tau = \bar{\beta}.\phi_*(\lambda_j^{(0)}).\beta$ for some $\lambda_j^{(0)}\in\Lambda$.  In this case let $\mathfrak{c}_j$ be a closed curve on $S_+$ that represents $\phi_*(\lambda_j^{(0)})\in\pi_1(S_+,x_+)$.

For each $j>0$, equation (\ref{comparison}) above determines a homotopy in $X$ from $\phi(\mathfrak{c}_{j-1})$ (if $\epsilon_{j-1} = 1$) or $\phi^{-1}(\mathfrak{c}_{j-1})$ (if $\epsilon_{j-1} = -1$) to $\mathfrak{c}_j$.  One produces from this a basic homotopy $H^j$ in $(M,S)$ by adjoining product collars in the obvious way.  By construction, $H^{j+1}$ starts on the opposite side of $S$ from $H^j$ for each $j<k$.  To show that the composition of $H^1,\hdots,H^k$ is reduced of length $k$, it remains only to show that each $H^j$ is essential.

This is clear when $H^j$ starts and ends on opposite components of $\partial X$, so let us consider a case where it does not.  If $\epsilon_{j-1} = 1$ and $\epsilon_j = -1$ then $\phi(\mathfrak{c}_{j-1}) = H^j(S^1\times\{0\})$ and $\mathfrak{c}_j = H^j(S^1\times\{1\})$ each lie in $S_+$.  Equation (\ref{comparison}) becomes
$$ \bar{\beta}.\phi_*(\lambda_{j-1}).\beta = \gamma_j (\bar{\beta}.\phi_*(\lambda_j^{(0)}).\beta)\gamma_j^{-1}, $$
and $H^j$ is a concatenation of four homotopies: the free homotopy from $\phi_*(\lambda_{j-1})$ to $\bar{\beta}.\phi_*(\lambda_{j-1}).\beta$, the pointed homotopy between left and right sides of (\ref{comparison}), the free homotopy from $\gamma_j(\bar{\beta}.\phi_*(\lambda_j^{(0)}).\beta)\gamma_j$, and finally the free homotopy to $\phi_*(\lambda_j^{(0)})$.

If there were a proper homotopy of $H^j$ into $S_+$ it would follow that $\gamma_j\in\Lambda_+$, contradicting the final assertion of Lemma \ref{treecrawler}.  The case $\epsilon_{j-1} = -1$, $\epsilon_j = 1$ is similar.\end{proof}

%%%%%%%%%%%%%%%%%%
\section{Essential surfaces and essential intersections}\label{essential}
%%%%%%%%%%%%%%%%%%

Now we shift gears to extend the theory of ``essential intersection'' for subsurfaces of a $2$-manifold that is introduced in \cite[\S 4]{BCSZ}.  There it is remarked that this notion ``has appeared implicitly in much of the literature on the characteristic submanifold of a Haken manifold''.  The results of \cite{BCSZ} are proved for \textit{large} subsurfaces (see below); we must allow annular components as well.  Many results extend directly to this context using similar proof strategies, but some require important caveats.

We will work in the PL category throughout the next two sections.  In particular, a \textit{polyhedron} is a topological space that admits the structure of a simplicial complex.  It is well known that the class of such spaces includes surfaces and $3$-manifolds.  We also use ``annulus" interchangeably with ``cylinder'' to refer to $S^1\times I$.

\begin{dfn}\label{large'n'stuff}  Let $S$ be an orientable surface with no $2$-sphere components.  If $K$ is a polyhedron, we will say a map $f\co K \to S$ is \textit{$\pi_1$-injective} if on each component $K_0$ the induced map on $\pi_1 K_0$ is injective, and \textit{large} if this map has nonabelian image.  

If $A \subset S$ is a subsurface, we will say $A$ is \textit{incompressible} if no component of $A$ is a disk and the inclusion map $A\to S$ is $\pi_1$-injective.  A component $A_0$ of an incompressible subsurface $A$ is \textit{redundant} if its inclusion map is homotopic in $S$ into another component of $A$.  We say $A \subset S$ is \textit{irredundant} if it is incompressible and has no redundant components.

If $A$ is a compact orientable surface, we will refer to the union of the components of $A$ with negative Euler characteristic as the \textit{large part} $A_{\call}$, and to the union of the core circles of the remaining annular components as the \textit{small part} $A_{\cals}$ of $A$.  (Note that $A_{\call}\cup A_{\cals}$ is properly contained in $A$.)\end{dfn}

\begin{remark}  If $A$ and $B$ are orientable surfaces and $h\co A \to B$ is a $\pi_1$-injective map, then $h(A_{\call})\subset B_{\call}$.  \end{remark}

The kind of argument we will use in this section is illustrated by a sketch proof for the following assertion: if $A$ is an incompressible subsurface of an orientable surface $S$ with no $2$-sphere components then each redundant component of $A$ is homeomorphic to an annulus.  

Suppose $A_0$ is such a component, whose inclusion map is homotopic in $S$ into another component $A_1$.  We may assume $A_1$ lies in the interior $\mathrm{int}\, S$ of $S$, after pushing off the boundary.  Choosing a basepoint in $A_1$, let $\tilde{S} \to \mathrm{int}\, S$ be the cover corresponding to $\pi_1 A_1$.  The inclusion $A_1 \to S$ lifts to an embedding to a subsurface $\tilde{A}_1 \subset \tilde{S}$ that carries $\pi_1 \tilde{S}$.  Therefore each component of $\tilde{S} - \mathrm{int}\,\tilde{A}_1$ is homeomorphic to a half-open annulus.  Since the inclusion map of $A_0$ is homotopic into $A_1$, it lifts to an embedding in $\tilde{S}$.  The inclusion map $A_0\to S$ is $\pi_1$-injective by hypothesis, so its lift is too, and the lift's image does not intersect $\tilde{A}_1$.  The latter fact implies that its image is contained in a half-open annulus, so $A_0$ is an orientable surface with cyclic fundamental group; hence an annulus.

The lemma below extends Lemma 4.1 of \cite{BCSZ}.

\begin{lemma}\label{homotopy to iso}  Suppose $A$ and $B$ are irredundant subsurfaces of a compact, orientable surface $S$ with no $2$-sphere or torus components, and $A$ is homotopic into $B$.  \begin{enumerate} 
\item\label{isotopic into}  $A$ is isotopic in $S$ to a subsurface of $B$.
\item\label{homeo to iso}  If $B$ is homeomorphic to an irredundant subsurface of $A$, then $A$ and $B$ are isotopic subsurfaces of $S$.
\item\label{isotopic to equal} If $B$ is homotopic into $A$, then $A$ and $B$ are isotopic subsurfaces of $S$.  
\end{enumerate}  \end{lemma}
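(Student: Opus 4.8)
The plan is to build everything on the covering-space technique illustrated just before the lemma statement, and to proceed through the three parts in order, since each feeds the next. For part~(\ref{isotopic into}), since $A$ is homotopic into $B$ I would pass to the cover $\widetilde{S}\to S$ (or rather $\mathit{int}\,S$, pushing $B$ off the boundary first) in which $B$ lifts to a subsurface $\widetilde{B}$ carrying $\pi_1\widetilde{S}$; then the homotopy lifts, so $A$ lifts to a map into $\widetilde{S}$ that is homotopic into $\widetilde{B}$. The point is that in $\widetilde{S}$ the complementary pieces $\widetilde{S}-\mathit{int}\,\widetilde{B}$ are half-open annuli, and an incompressible surface in $\widetilde{S}$ can be isotoped off them component by component; here one must check that no component of $A$ gets ``absorbed'' into the wrong piece, using irredundancy of $A$ to prevent two components from being forced into the same complementary annulus or into the same portion of $\widetilde{B}$. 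Pushing the resulting embedding $\widetilde{A}\hookrightarrow\widetilde{B}$ back down and quoting the standard fact that a $\pi_1$-injective incompressible subsurface which is homotopic into $B$ is in fact \emph{isotopic} into $B$ (this is where compactness and the absence of sphere and torus components of $S$ get used, to run the usual innermost-disk / outermost-arc surgery on the intersection pattern of $\partial A$ with $\partial B$) gives the conclusion. I would cite Lemma~4.1 of \cite{BCSZ} for the large-component case and only spell out the modifications needed to accommodate annular components of $A$ and $B$.

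For part~(\ref{homeo to iso}), I would first apply part~(\ref{isotopic into}) to isotope $A$ into $B$, so we may assume $A\subset B\subset S$ with the inclusion $\pi_1$-injective. The hypothesis gives an irredundant subsurface $B'\subset A$ homeomorphic to $B$, hence with $\chi(B')=\chi(B)$ and the same number of components in each ``type'' (large components and core annuli). Since $B'\subset A\subset B$, comparing Euler characteristics and component counts forces $A=B$ up to closing off trivial collar regions: any component of $B\setminus\mathit{int}\,A$ must be an annulus (else $\chi$ drops), and an innermost such annulus either connects two boundary components of $A$ — impossible without reducing the component count or creating a redundancy — or is a collar of a component of $\partial B$, which can be absorbed by isotopy. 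Iterating removes all of $B\setminus\mathit{int}\,A$, giving an isotopy from $A$ to $B$. The bookkeeping needs the irredundancy of both $A$ and $B$ to rule out a redundant annulus appearing after absorbing a collar.

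Part~(\ref{isotopic to equal}) is then formal: if each of $A$, $B$ is homotopic into the other, apply part~(\ref{isotopic into}) twice to get, after isotopy, inclusions realizing $A$ as a subsurface of $B$ and (separately) $B$ as a subsurface of $A$; in particular $B$ is homeomorphic to an irredundant subsurface of $A$ (namely an isotopic copy of itself sitting inside $A$), so part~(\ref{homeo to iso}) applies and $A$ and $B$ are isotopic.

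The main obstacle I anticipate is part~(\ref{isotopic into}), specifically the lift-and-isotope-off-the-complementary-annuli step when $A$ has annular components: a single core annulus of $A$, being only $\pi_1$-injective rather than carrying a thick subsurface, can a priori lift into $\widetilde{S}$ in a way that straddles or runs parallel to a complementary half-open annulus, and one has to argue carefully — using that $\widetilde{B}$ carries $\pi_1\widetilde{S}$ and that the complementary pieces are \emph{half-open} (so have cyclic, peripheral $\pi_1$) — that after isotopy every component of the lift lands inside $\widetilde{B}$, and moreover that distinct components cannot be pushed onto the same part of $\widetilde{B}$ in a way that would violate irredundancy downstairs. Controlling this component-level combinatorics, and checking it is compatible with the subsequent innermost-disk surgeries that convert ``homotopic into'' to ``isotopic into,'' is where the real work lies; the Euler-characteristic arguments in parts~(\ref{homeo to iso}) and~(\ref{isotopic to equal}) are then comparatively routine.
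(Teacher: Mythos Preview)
Your plan is essentially the paper's: cite \cite[Lemma 4.1]{BCSZ} for the large parts and add the work needed for annular components, using the covering-space picture where $\widetilde{S}-\mathit{int}\,\widetilde{B}$ consists of half-open annuli. Two points deserve sharpening.

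First, in part~(\ref{isotopic into}) the cover is not used by isotoping $\widetilde{A}$ inside $\widetilde{S}$ and then ``pushing back down''---an isotopy upstairs need not project to an isotopy in $S$. The paper instead begins downstairs: put $\partial A_{\call}\cup A_{\cals}$ in minimal position with respect to $\partial B$, and use the cover only to derive a contradiction to minimality whenever an intersection remains (exactly as in the \cite{BCSZ} argument you cite). After that, the annular components of $A$ are pushed across annuli in $S$ into $B$, using irredundancy of $A$ to guarantee the interiors of those annuli are free of other components. This is the ``innermost-disk/outermost-arc'' step you allude to, but it happens in $S$, not in the cover.

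Second, in part~(\ref{homeo to iso}) your assertion that a complementary annulus joining two components of $\partial A$ is ``impossible without reducing the component count or creating a redundancy'' is not quite right: such an annulus can raise genus instead, and only the existence of the irredundant embedding $B'\hookrightarrow A$ with $B'\cong B$ rules this out (via a rank-of-$\pi_1$ or genus comparison, not a component count). The paper sidesteps this by treating large and small parts separately: for $A_{\call}\subset B_{\call}$ it invokes the \cite{BCSZ} argument to get directly that every component of $\overline{B_{\call}-A_{\call}}$ is an annulus with \emph{exactly one} boundary circle on $A_{\call}$, and then does the bookkeeping on annular components by counting, using irredundancy on both sides. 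Your Euler-characteristic reduction is the right first move, but the case analysis that follows needs the finer input from \cite{BCSZ}.
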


\begin{proof}  We follow the outline of the proof of \cite[Lemma 4.1]{BCSZ}; as there, assume without loss of generality that $S$ is connected.  If $S$ is an annulus, then any irredundant subsurface of $S$ is also an annulus, and the conclusions of the lemma follow quickly.  We thus assume below that $S$ has negative Euler characteristic.

We first prove (\ref{isotopic into}).  We initially consider only $A_{\call} \cup A_{\cals}$; that is, the disjoint union of the large part $A_{\call}$ of $A$ and the $1$-submanifold $A_{\cals}$ consisting of the cores of the annular components.  Analogous to \cite[Lemma 4.1]{BCSZ}, we choose this object within its isotopy class so that $\partial A_{\call} \cup A_{\cals}$ meets $\partial B$ transversely in the minimal number of points possible; and, among all intersection-minimizing representatives, to minimize the number of components of $\partial A_{\call}\cup A_{\cals}$ not contained in $B$.

Given a component $A_0$ of $A_{\call}$ that is homotopic into a component $B_0$ of $B$, the proof of Lemma 4.1(1) in \cite{BCSZ} again shows here that with our assumptions, $A_0\subset B_0$.  We must simply replace instances of $\partial A$ by $\partial A_{\call}\cup A_{\cals}$ in the paragraph spanning pp.~2405--2406 there and its sequel.  Pushing off $\partial B_0$ for each such component $B_0$, we will assume $A_{\call}$ is contained in the interior of $B$.

Now suppose $\acurve_0$ is a component of $A_{\cals}$ and let $B_0$ be the component of $B$ into which it is homotopic.  We again follow the proof of \cite[Lemma 4.1]{BCSZ}: fixing a base point in $B_0$, let $p\co\tilde{S} \to \mathrm{int}\, S$ be the cover corresponding to $\pi_1 B_0$.  The inclusion map $B_0\to S$ lifts to an embedding to a component $\tilde{B}_0$ of $p^{-1}(B_0)\subset\tilde{S}$, and because $\acurve_0$ is homotopic into $B_0$ it too lifts to a simple closed curve $\tilde{\acurve}_0$ in $\tilde{S}$.  Since $B_0$ is incompressible, the inclusion $\tilde{B}_0\to \tilde{S}$ induces an isomorphism at the level of $\pi_1$, and so each component of $X = \tilde{S} - \mathrm{int}\,\tilde{B}_0$ is homeomorphic to a half-open annulus.

If $\tilde{\acurve}_0$ meets $\partial\tilde{B}_0$, then the argument of the paragraph that spans pp. 2405--2406 in \cite{BCSZ} and its sequel again yields a contradiction to our minimality assumption (after the same adjustment as before).  Therefore $\tilde{\acurve}_0$ is disjoint from $\partial \tilde{B}_0$, and if $\tilde{\acurve}_0$ is not contained in $\tilde{B}_0$ then it is contained in an annular component $Z$ of $X$.  (Unlike in the proof of \cite[Lemma 4.1]{BCSZ} this can occur, since $\pi_1 \acurve_0 \cong \mathbb{Z}$.)

Since $\tilde{\acurve}_0$ is a homotopically nontrivial simple closed curve in $Z$, it cobounds an annulus with the component $\tilde{\bcurve}_0$ of $\partial\tilde{B}_0$ that bounds $Z$.  This annulus projects to a free homotopy between $\acurve_0$ and $\bcurve_0 = p(\tilde{\bcurve}_0)$, a component of $\partial B_0$.  Theorem 2.1 of \cite{Epstein} now implies that $\acurve_0$ is isotopic to $\bcurve_0$ and hence, pushing a bit further, into the interior of $B_0$.  This isotopy may be taken to be supported in a small enough neighborhood of the annulus bounded by $\acurve_0$ and $\bcurve_0$ that it leaves invariant all components of $A_{\call}\cup A_{\cals}$ inside $B_0$, and all components of $A_{\cals}$ outside the annulus.  After a finite sequence of such isotopies we have $A_{\call}\cup A_{\cals}\subset B$.

To complete the proof of (\ref{isotopic into}), fix a hyperbolic metric with convex boundary on $S$, and choose $\epsilon >0$ so that for each component $\mathfrak{a}$ of $A_{\cals}$, the following hold:\begin{enumerate}
\item  The $\epsilon$--neighborhood $\mathcal{N}_{\epsilon}(\mathfrak{a})$ is regular and contained in the component $A_0$ of $A$ containing $\mathfrak{a}$.
\item  Throughout the isotopy described above, $\mathcal{N}_{\epsilon}(\mathfrak{a})$ remains regular, and $\mathfrak{a}$ has distance at least $2\epsilon$ from every other component of $A_{\call} \cup A_{\cals}$.
\item   After the isotopy described above, $\mathcal{N}_{\epsilon}(\mathfrak{a}) \subset B$.  \end{enumerate}
By the first criterion above $A$ deformation retracts to the union of $A_{\call}$ with $\bigcup_{\mathfrak{a}} \mathcal{N}_{\epsilon}(\mathfrak{a})$ over the components $\mathfrak{a}$ of $A_{\cals}$.  By the second criterion, the isotopy of $A_{\call} \cup A_{\cals}$ extends to this union, and by the third, it takes it into $B$.  This establishes (\ref{isotopic into}).

We now turn to the proof of (\ref{homeo to iso}).  Using (\ref{isotopic into}), we will assume that $A \subset \mathrm{int}\, B$.  In particular, $A_{\call} \subset \mathrm{int}\, B_{\call}$.  Since $\pi_1$-injective maps preserve large parts, $B_{\call}$ is homeomorphic to a large subsurface of $A_{\call}$.  The last 3 paragraphs on \cite[p. 2406]{BCSZ} thus imply that each component of $\overline{B_{\call} - A_{\call}}$ is an annulus with exactly one boundary component in $A_{\call}$.  In particular, we note that $\chi(B_{\call}) = \chi(A_{\call})$, where $\chi(S)$ refers to the Euler characteristic of $S$.

Since $A$ is irredundant it follows that each annular component of $A$ is contained in an annular component of $B$, and that no two are contained in the same component.  Therefore $B_{\cals}$ has at least as many components as $A_{\cals}$.  If $B_{\cals}$ had more components than $A_{\cals}$, then the homeomorphic embedding $B \to A$ would either take two annular components into the same annular component of $A$, contradicting irredundancy of the image, or would take an annular component of $B$ into a component of $A_{\call}$.  But since the image of $B_{\call}$ is a large subsurface of $A_{\call}$ with the same Euler characteristic, each component of its complement is an annulus, and the latter possibility above again contradicts irredundancy of the image of $B$.

We thus find that each annular component of $B$ contains a unique component of $A$ as an incompressible sub-annulus.  Together with the assertions above regarding $A_{\call} \subset B_{\call}$, this implies (\ref{homeo to iso}).  

To establish (\ref{isotopic to equal}), we note that if $B$ is homotopic into $A$, then by (\ref{isotopic into}) it is isotopic to a subsurface of $A$.  This subsurface is necessarily irredundant, since $B$ is, hence the desired conclusion follows from (\ref{homeo to iso}).  \end{proof}

The following proposition extends \cite[Proposition 4.2]{BCSZ}.  Below we reference the ``large intersection'' $A \lcap B$ of large surfaces $A$ and $B$ from \cite[Definition 4.3]{BCSZ}.

\begin{prop}\label{essential intersection}  Suppose $A$ and $B$ are irredundant subsurfaces of an orientable compact surface $S$ with no $2$-sphere or torus components.  Then up to non-ambient isotopy there is a unique irredundant subsurface $C$ of $S$ with the following property: \begin{itemize}
\item[($\ast$)]  $C_{\call} = A_{\call} \lcap B_{\call}$, and for a polyhedron $K$ and a map $f\co K\to S$ such that $f_*(\pi_1 K_0)\neq 1$ for each component $K_0$ of $K$, $f$ is homotopic into each of $A$ and $B$ if and only if $f$ is homotopic into $C$.  \end{itemize}
Furthermore, there are subsurfaces $A' \subset S$ and $B' \subset S$, isotopic to $A$ and $B$, respectively, such that $\partial A'$ meets $\partial B'$ transversely and a union $C$ of components of $A' \cap B'$ satisfies $(\ast)$ above.  \end{prop}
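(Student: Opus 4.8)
The plan is to obtain $C$ by attaching to the large intersection $A_{\call}\lcap B_{\call}$ of \cite[Proposition 4.2]{BCSZ} a finite family of annuli carrying the essential simple closed curves that are homotopic into both $A$ and $B$ but that the large intersection does not detect, and then to deduce uniqueness from Lemma \ref{homotopy to iso}. The case $S\cong S^1\times I$ is trivial, so assume $\chi(S)<0$. First I would isotope $A$ and $B$ so that $A_{\call}$ and $B_{\call}$ realize the minimal position underlying \cite[Definition 4.3]{BCSZ}, and set $C_{\call}=A_{\call}\lcap B_{\call}$; by \cite[Proposition 4.2]{BCSZ} this is an irredundant union of components of $A_{0,\call}\cap B_{0,\call}$, and --- applying its defining property to the inclusion $A_{\call}\lcap B_{\call}\hookrightarrow S$ and Lemma \ref{homotopy to iso}(\ref{isotopic into}) --- it is isotopic into both $A$ and $B$. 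Let $\mathfrak{c}_1,\dots,\mathfrak{c}_m$ be pairwise non-isotopic essential simple closed curves, one from each isotopy class of essential simple closed curve in $S$ homotopic into both $A$ and $B$ but not into $C_{\call}$. Then $C$ is to be the union of $C_{\call}$ with one annular neighborhood of each $\mathfrak{c}_j$, all realized as components of $A_0\cap B_0$ for a suitable pair $A_0\simeq A$, $B_0\simeq B$ with $\partial A_0$ meeting $\partial B_0$ transversely.

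Two points in this construction require work. The first is finiteness of $\mathfrak{c}_1,\dots,\mathfrak{c}_m$. I would show that any two members can be realized disjointly: each is isotopic into a single component of $A$ and a single component of $B$ --- the same one in each surface, or else disjointness is automatic --- so if the two curves cannot be made disjoint they fill a subsurface $Z$ of negative Euler characteristic inside that component of $A$, while a regular neighborhood of their union inside the relevant component of $B$ is an isotopic copy of $Z$. Being large and isotopic into both $A$ and $B$, $Z$ is, by \cite[Proposition 4.2]{BCSZ}, homotopic into $A_{\call}\lcap B_{\call}=C_{\call}$, so both curves would be homotopic into $C_{\call}$, a contradiction. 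Since a compact surface carries only boundedly many pairwise disjoint, pairwise non-isotopic essential simple closed curves, the family is finite. The second point is existence of the position $A_0,B_0$. When $\mathfrak{c}_j$ is isotopic to the core of an annular component of $A$ or of $B$, one slides that annulus along an annulus realizing its homotopy until it lies inside the other surface, where the whole annulus becomes a component of $A_0\cap B_0$. When instead $\mathfrak{c}_j$ is homotopic into large components $A_i$ of $A$ and $B_k$ of $B$, it is isotopic into $A_i\cap B_k$ taken in minimal position, and the component $P$ of $A_i\cap B_k$ containing it must be an annulus: a large $P$ would be a subsurface of both $A_{\call}$ and $B_{\call}$, hence homotopic into $A_{\call}\lcap B_{\call}=C_{\call}$ by \cite[Proposition 4.2]{BCSZ}, forcing $\mathfrak{c}_j$ into $C_{\call}$. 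Such a $P$, being an annulus with core isotopic to $\mathfrak{c}_j$ and disjoint from all annular parts of $A_0$ and $B_0$, is a component of $A_0\cap B_0$ of the required kind. Because $C_{\call}$ is disjoint from the annular parts of $A_0$ within $A_0$, and likewise for $B_0$, the chosen annuli are automatically disjoint from the components comprising $C_{\call}$; so $C$ is a union of components of $A_0\cap B_0$, and it is irredundant since $C_{\call}$ is, the added annuli are essential and pairwise non-isotopic, and none is homotopic into $C_{\call}$ by choice.

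To verify $(\ast)$ I would treat large and cyclic maps separately. A large $\pi_1$-injective $f\co K\to S$ is homotopic into $A$ (resp.\ $B$) if and only if it is homotopic into $A_{\call}$ (resp.\ $B_{\call}$), since each component of $f$ has nonabelian image and so maps into a large component, and $f$ is homotopic into $C$ if and only if into $C_{\call}$; as $C_{\call}=A_{\call}\lcap B_{\call}$, this case is precisely \cite[Proposition 4.2]{BCSZ}. If $f$ has nontrivial cyclic image on each component, then on each component $f$ factors up to homotopy through a regular neighborhood $Y$ of an essential closed curve, and, because the subgroup of a surface group carried by an incompressible subsurface is root-closed, $f$ is homotopic into an incompressible subsurface $D$ of $S$ if and only if $Y$ is. If $Y$ is large, the previous paragraph applied to the inclusion $Y\hookrightarrow S$ shows $Y$ homotopic into both $A$ and $B$ if and only if into $C_{\call}$, hence if and only if into $C$; if $Y$ is an annulus, its core is an essential simple closed curve, and the construction of $\mathfrak{c}_1,\dots,\mathfrak{c}_m$ shows $Y$ --- hence $f$ --- is homotopic into $C$ if and only if it is homotopic into both $A$ and $B$. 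This establishes $(\ast)$ and the ``furthermore'' clause.

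For uniqueness, suppose $C$ and $C'$ are irredundant subsurfaces each satisfying $(\ast)$. The inclusion $C\hookrightarrow S$ is $\pi_1$-injective and, by $(\ast)$ for $C$, homotopic into both $A$ and $B$; by $(\ast)$ for $C'$ it is therefore homotopic into $C'$, and symmetrically the inclusion of $C'$ is homotopic into $C$. Lemma \ref{homotopy to iso}(\ref{isotopic to equal}) then gives that $C$ and $C'$ are isotopic subsurfaces of $S$, which is the asserted uniqueness. I expect the main obstacle to lie in the second paragraph: engineering a single transverse pair $A_0,B_0$ that simultaneously realizes $A_{\call}\lcap B_{\call}$ as in \cite[Proposition 4.2]{BCSZ} and displays every distinguished curve class as an annular component of $A_0\cap B_0$, together with the verification --- sketched above --- that in minimal position no such curve is absorbed into a large component of the intersection.
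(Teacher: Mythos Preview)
Your overall strategy---adjoin annuli to $C_{\call}=A_{\call}\lcap B_{\call}$ and deduce uniqueness from Lemma \ref{homotopy to iso}(\ref{isotopic to equal})---matches the paper's, and your finiteness argument for the $\ccurve_j$ (pairwise disjointness via filling a large subsurface) is a correct alternative to the paper's bound. But the ``furthermore'' clause has a real gap: your assertion that when $\ccurve_j$ is homotopic into large components $A_i$ of $A$ and $B_k$ of $B$, it is isotopic into $A_i\cap B_k$ in minimal position, is false. Take $S$ closed of genus two and $A_i,B_k$ the two one-holed tori on either side of the separating curve $\gamma$, realized disjointly; then $|\partial A_i\cap\partial B_k|=0$ is minimal, yet $\gamma$ is homotopic into both (it is parallel to each boundary) while $A_i\cap B_k=\emptyset$ and $C_{\call}=\emptyset$. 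The paper's covering-space argument detects exactly this phenomenon: lifting to the cover corresponding to $\pi_1 A_i$, the relevant lift of $B_k$ either meets $\tilde{A}_i$---in which case your argument goes through---or lies entirely in a complementary half-open annulus, forcing $\ccurve_j$ to be parallel to disjoint, parallel boundary components $\acurve\subset\partial A_i$ and $\bcurve\subset\partial B_k$. In that second case one must \emph{further isotope} $B_k$ across the annulus in $S$ cobounded by $\acurve$ and $\bcurve$ to create the required annular component of $A_0\cap B_0$; minimal boundary-intersection position alone does not suffice. You correctly flagged this paragraph as the main obstacle, but the sketch you gave does not address this case.

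A smaller point on your verification of $(\ast)$: the equivalence ``$f$ homotopic into $D$ if and only if $Y$ is'' requires $Y$ to be chosen canonically, for instance as a regular neighborhood of the geodesic representative of the primitive root. Root-closure only yields the equivalence between $f$ and the primitive curve $c$; if $c$ is non-simple, an arbitrary regular neighborhood of a representative of $c$ need not be homotopic into $D$ even when $c$ is. With the geodesic choice the argument is fine; the paper sidesteps the issue by working directly with the curve and the covering-space reduction via Lemma \ref{not large}.
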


\begin{dfn}\label{the real essential intersection}  If $A$ and $B$ are irredundant subsurfaces of an orientable compact surface $S$, we say an irredundant surface $C$ that satisfies condition ($\ast$) of Proposition \ref{essential intersection} \textit{represents the essential intersection $A\essint B$ of $A$ and $B$}.  \end{dfn}

Proposition \ref{essential intersection} implies in particular that each of $A$ and $B$ contains a subsurface that represents $A\essint B$, and that these subsurfaces are isotopic in $S$.  

\begin{proof}[Proof of Proposition \ref{essential intersection}]  We assume without loss of generality that $A,B \subset \mathrm{int}\, S$.  If $C$ and $C'$ are surfaces with property $(\ast)$, then $C$ is homotopic into $C'$ and vice-versa.  Hence Lemma \ref{homotopy to iso}(\ref{homeo to iso}) implies that they are isotopic, establishing uniqueness.

Now let $B_0$ be a representative of the isotopy class of $B$ in $S$ with the property that $\partial B_0$ meets $\partial A$ transversely in the smallest possible number of points, and let $C_0$ be the union of the components of $A\cap B_0$ that are large.  (In the language of \cite{BCSZ}, $C_0 = \call(A \cap B_0)$.)  The proof of \cite[Proposition 4.2]{BCSZ} implies that for any polyhedron $K$, every large map $f\co K \to S$ that is homotopic into $A$ and $B$ is also homotopic into $C_0$.  (Recall from Definition \ref{large'n'stuff} that $f\co K\to S$ is \textit{large} if $f_*(\pi_1 K_0)$ is non-abelian for each component $K_0$ of $K$.)  We will construct $C$ by adding annular components to $C_0$.

Suppose $K$ is a connected polyhedron and $f\co K\to S$ is a map with $f_*(\pi_1 K) \neq \{1\}$, homotopic into $A$ and $B$ but not $C_0$.  Let $A_1$ be a component of $A$ such that $f$ is homotopic into $A_1$, let $p\co \tilde{S}\to \mathrm{int}\, S$ be the covering space corresponding to $\pi_1 A_1$, and let $\tilde{A} \subset \tilde{S}$ be a component of $p^{-1}(A)$ mapping homeomorphically under $p$.  Since $A_1$ is $\pi_1$-injective in $S$, the inclusion-induced homomorphism $\tilde{A} \to \tilde{S}$ is an isomorphism and hence every component of $X = \tilde{S} - \mathrm{int}\, \tilde{A}$ is a half-open annulus.

Note that since $f$ is homotopic into $B$, it is homotopic into $B_0$.  Since $f$ is homotopic into $A_1$, it admits a lift $\tilde{f}$ to $\tilde{S}$; furthermore, the homotopy into $B_0$ lifts to a homotopy of $\tilde{f}$ to a map $g$ with image in $p^{-1}(B_0)$.  Let $\tilde{B}_0$ be the component of $p^{-1}(B_0)$ containing $g(K)$.  Unlike in the proof of \cite[Proposition 4.1]{BCSZ}, it is not necessarily true that $\tilde{B}_0$ intersects $\tilde{A}_1$.  We will treat the two cases separately.

Suppose first that $\tilde{B}_0\cap\tilde{A}_1\neq\emptyset$.  Then the argument that begins in the paragraph of \cite{BCSZ} spanning pp. 2407--2408 establishes that $\tilde{B}_0$, hence also $\tilde{f}$, deforms in $\tilde{S}$ into $\tilde{B}_0 \cap \tilde{A}_1$.  Projecting this homotopy of $\tilde{f}$ to $S$ yields a homotopy of $f$ into a component of $B_0\cap A_1$.  Since $f_*(\pi_1 K)\neq\{1\}$, this component is not a disk.  Since $f$ is not homotopic into $C_0$ this component is not large, so it is an annulus $Z_1$ which moreover is not parallel to any component of $C_0$. 

In this case let $C_1 = C_0\cup Z_1$, and let $A_1' = A$ and $B_1' = B_0$.  These are subsurfaces of $S$ respectively isotopic to $A$ and $B$, such that $C_1$ is a union of components of their intersection.
  
Suppose now that $\tilde{B}_0 \cap \tilde{A}_1 = \emptyset$, and let $Z$ be the component of $X$ containing $\tilde{B}_0$.  Since $\pi_1\tilde{B}_0$ contains $g_*(\pi_1 K)\neq \{1\}$, $\tilde{B}_0$ has a boundary component $\mathfrak{b}_0$ that is a homotopically non-trivial simple closed curve in $Z$.  Hence $\bcurve_0$ cobounds an annulus $Z_0 \subset Z$ together with $\mathfrak{a}_0 =  \partial Z$.  If any component of the frontier in $\tilde{S}$ of $p^{-1}(B_0)$ intersected $\mathfrak{a}_0$, there would thus be a disk in $Z_0$ with boundary $\alpha \cup \beta$, where $\alpha \subset \acurve_0$ and $\beta \subset \partial(p^{-1}(B_0))$.  If this did occur then $B_0$ could be isotoped to reduce the number of intersections with $A$, by the argument of the paragraph of \cite{BCSZ} spanning pp. 2405--2406.  Thus $\acurve_0 \cap p^{-1}(B_0) = \emptyset$.

Since $p$ projects $A_1$ homeomorphically, it sends $\mathfrak{a}_0$ homeomorphically to a component of $\partial A_1$ in $S$.  Since $\tilde{B}_0$ is a component of $p^{-1}(B_0)$, $p$ restricts on $\mathfrak{b}_0$ to a $k$-to-$1$ covering map to a component $\mathfrak{b}$ of $\partial B_0$ for some $k \geq 1$.  By the paragraph above, $\mathfrak{b}$ does not intersect $\mathfrak{a}$.  Furthermore, the annulus $Z_0$ bounded by $\mathfrak{a}_0$ and $\mathfrak{b}_0$ projects under $p$ to a free homotopy in $S$ between $\mathfrak{a}$ and the $k$th power of $\mathfrak{b}$.  Since $S$ is an orientable surface, by \cite[Lemma 2.4]{Epstein} $k=1$ and $\mathfrak{a}$ and $\mathfrak{b}$ bound an annulus $Z_1$ in $S$.  It still holds in this case that $g$, and hence also $f$, is homotopic into $Z_1$ since the annulus $Z\subset \tilde{S}$ containing $\tilde{B}_0$ deformation retracts to $\acurve_0$.

We may assume that $Z_1\cap A = \acurve$.  If this is not so then since $A$ is essential and irredundant, a component $A_2$ of $A$ intersects $\bcurve$.  If a component of $\partial A_2$ intersected $\bcurve$ then by an innermost disk argument there would be an isotopy of $B_0$ reducing the number of intersections of $\partial B_0$ with $\partial A$, a contradiction.  Therefore $\bcurve\subset A_2$, so $f$ is homotopic into $A_2$, and putting $A_2$ in the role of $A_1$ in the argument above we find that $\tilde{B}_0\cap\tilde{A}_2\neq \emptyset$; ie, we are in the first case.  So assuming that none of the possible choices of $A_1$ yields the first case, we have $Z_1\cap A = \acurve$.

A similar argument shows that $B_0\cap A_1 = \bcurve$, and it follows in this case that $A_1' = A\cup Z_1$ and $B_1'=B_0\cup Z_1$ are respectively isotopic to $A$ and $B$, and that $Z_1$ is a component of $A_1'\cap B_1'$.  Again in this case, let $C_1 = C_0 \cup Z_1$.

We now repeat the argument above but with $C_1$ in the role of $C_0$.  If there is a polyhedron $K$ and a map $f\co K\to S$ homotopic into $A$ and $B$ but not $C_1$, with $f_*(\pi_1 K)\neq\{1\}$, then this argument produces an essential annulus $Z_2\subset S$ with $f$ homotopic into $Z_2$.  We may take $Z_2$ either to be a component of $A\cap B$ or to intersect each in a distinct component of its frontier.  In either case $Z_2$ is disjoint from $C_1$, and there exist surfaces $A_2'$ and $B_2'$ respectively isotopic to $A$ and $B$ such that $C_2 = C_0\cup Z_1\cup Z_2$ is a union of components of $A_2'\cap B_2'$.

Iterating this process produces a sequence $\{C_n\}$ of subsurfaces of $S$ with the property that $C_n = C_{n-1}\cup Z_n$ for an essential annulus $Z_n$ disjoint from and not isotopic into $C_{n-1}$, which is either a component of $A\cap B$ or intersects $A$ and $B$ in distinct components of its frontier.  The process terminates at some finite $n$, since $A\cap B$ has only finitely many components, and each of $A$ and $B$ have only finitely many boundary components.  It then follows from the construction above that $C \doteq C_n$ has the property $(\ast)$.
\end{proof}

The result below, which we will use in the proof of Proposition \ref{carriers}, extends Proposition 4.4 of \cite{BCSZ}.  Its statement and proof follow those of its predecessor, but an additional case must be considered.

Below, for a subset $S$ of a topological space $X$, we refer to the \textit{frontier} of $S$ in $X$ as $\mathrm{fr}\, S \doteq \overline{S} \cap \overline{X-S}$.

\begin{prop}\label{pushing homotopy}  Suppose $B$ is an irredundant subsurface of a compact, orientable surface $S$ with no $2$-sphere components, and for a connected polyhedron $K$ let $f\co K \to B$ satisfy $f_*(\pi_1 K)\neq \{1\}$.  If $g\co K\to B$ is homotopic to $f$ in $S$, then: \begin{enumerate}
\item Either $f$ and $g$ are homotopic in $B$; or 
\item For distinct components $\acurve$ and $\bcurve$ of the frontier of $B$ that are parallel in $S$ but not $B$, $f$ is homotopic into $\acurve$, and $g$ into $\bcurve$, in $B$.  \end{enumerate}\end{prop}

\begin{remark}  To directly extend \cite[Proposition 4.4]{BCSZ} we must allow $K$ to be disconnected.  Such a result is obtained by applying Proposition \ref{pushing homotopy} component-by-component.\end{remark}

\begin{proof}  Assume $B \subset \mathrm{int}\, S$, and let $B_0$ be the component of $B$ containing $f(K)$.  Choosing a base point in $B_0$, we let $p\co \tilde{S} \to \mathrm{int}\, S$ be the cover corresponding to $\pi_1(B_0) < \pi_1(S)$.  By construction, the inclusion map $B_0 \to S$ lifts to an embedding to $\tilde{S}$ with image a subsurface which we denote by $\tilde{B}_0$, that carries the fundamental group of $\tilde{S}$.  Since $B_0$ is $\pi_1$-injective in $S$, each component of $\tilde{S} - \mathrm{int}\,\tilde{B}_0$ is homeomorphic to a half-open annulus.  In particular, there is a deformation retraction $r\co \tilde{S} \to \tilde{B}_0$.

Since $f$ maps $K$ into $B_0$, composing with the lift of the inclusion map gives a lift $\tilde{f}\co K \to \tilde{S}$ with $\tilde{f}(K) \subset \tilde{B}_0$; furthermore, the homotopy from $f$ to $g$ lifts to a homotopy $H$ from $\tilde{f}$ to a lift $\tilde{g}$ of $g$ with image in $p^{-1}(B)$.  If $\tilde{g}$ has image in $\widetilde{B}_0$, then $H_1 = p\circ r \circ H$ is a homotopy between $f$ and $g$ with image in $B_0$.

If $\tilde{g}$ does not map into $\tilde{B}_0$ then the component of $p^{-1}(B)$ containing its image lies in a component $Z$ of $\tilde{S}-\mathrm{int}\, B_0$, a half-open annulus.  In this case the time-$1$ map of $r\circ H$ has its image in the frontier $\tilde{\acurve} = Z\cap \tilde{B}_0$ of $Z$.  So $p\circ r\circ H$ is a homotopy of $f$ in $B$, into a component $\acurve = p(\tilde{\acurve})$ of the frontier of $B$.

Switching the roles of $f$ and $g$ in the argument above, we find that if $f$ and $g$ are not homotopic in $B$ then $g$ is homotopic in $B$ into a component $\bcurve$ of the frontier of $B$.  This is distinct from $\acurve$ and not parallel to it in $B$, since it follows from algebraic topology that two maps from a polyhedron (or more generally, a CW-complex) $K$ to $S^1$ that induce the same map on $\pi_1 K$ are homotopic.  See eg.~Exercise 2 of \cite[\S4.A]{Hatcher_AT}.  Let us now choose arcs from $\acurve$ and $\bcurve$ to the basepoint of $\pi_1 S$ and again denote by $\acurve$ and $\bcurve$ the elements of $\pi_1 S$ thus determined.  The non-trivial subgroup $f_*(\pi_1 K) = g_*(\pi_1 K)$ of $\pi_1 S$ is contained in both a conjugate of the subgroup $\langle \acurve\rangle$ generated by $\acurve$ and a conjugate of $\langle\bcurve\rangle$. 

Our hypotheses ensure that $\pi_1 S$ is isomorphic either to $\mathbb{Z}\oplus\mathbb{Z}$ or a Fuchsian group.  In either case, standard results ensure that any two cyclic subgroups with non-trivial intersection both lie in a single cyclic group.  (In the Fuchsian case see eg.~Theorems 2.3.3 and 2.3.5 of \cite{Katok}.)  Thus there exists $\gamma\in\pi_1 S$ such that certain conjugates of $\acurve$ and $\bcurve$ are powers of $\gamma$.  But since these conjugates represent the \textit{simple} closed curves $\acurve$ and $\bcurve$ they are primitive elements of $\pi_1 S$ (see eg.~\cite[Prop.~1.4]{FaMa}), and it follows that $\acurve$ is conjugate to $\bcurve^{\pm 1}$ in $\pi_1 S$.  Lemma 2.4 of \cite{Epstein} now implies that $\acurve$ and $\bcurve$ are parallel.
\end{proof}

The lemma below distills a fact from the proof of Proposition \ref{pushing homotopy} that we will use in the following section.

\begin{lemma}\label{homotopic out, homotopic in}  Let $B$ be a compact, connected incompressible subsurface of a surface $S$, and for a polyhedron $K$ suppose $f\co K\to B$ is homotopic into $S-B$.  Then $f$ is homotopic in $B$ into $\mathrm{fr}\, B$.  \end{lemma}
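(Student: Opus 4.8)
The plan is to realize the hypothesis by a free homotopy, put it in minimal position with respect to $\mathrm{fr}\, B$, and read off the conclusion from the part of the homotopy nearest $\ccurve$; a fair amount of preparation is needed to make the minimal-position argument run. (We take $\ccurve$ to be an embedded curve, as in the situations where the lemma is used; for a general closed curve one must also assume $B$ incompressible, and then the Bass--Serre splitting of $\pi_1 S$ along $\mathrm{fr}\, B$ gives a quicker route.) First I would reduce to the case that $B$ is connected — $\ccurve$ lies in one component $B_0$, with $\mathrm{fr}_S B_0 \subseteq \mathrm{fr}\, B$ — and that $\mathrm{fr}\, B \neq \emptyset$, since otherwise $B$ is a component of $S$ and the hypothesis is vacuous. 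If $\ccurve$ bounds a disk in $B$, I can homotope it in $B$ to a point and then, $B$ being connected, into $\mathrm{fr}\, B$; so I may assume $\ccurve$ is essential in $B$, and in particular $B$ is not a disk. After a homotopy, $\ccurve\subset\mathit{int}\, B$.

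Next I would arrange that no component of $\mathrm{fr}\, B$ bounds a disk in $S$. If $\ccurve$ is null-homotopic in $S$ then, since $S$ has no $2$-sphere components, $\ccurve$ bounds a disk $D$ in $S$, and $D\not\subset B$; a component of $D\cap\mathrm{fr}\, B$ nearest $\partial D$ then cobounds with $\ccurve$ an annulus lying in $B$, isotoping $\ccurve$ in $B$ into $\mathrm{fr}\, B$, which finishes the proof in this case. So I may assume $\ccurve$ is essential in $S$. If $\overline{S-B}$ has a disk component $\Delta$, note $\partial\Delta\subseteq\mathrm{fr}\, B$; either a similar argument isotopes $\ccurve$ in $B$ into $\partial\Delta$, or I replace $B$ by $B\cup\Delta$, which deformation retracts to $B$ rel $B$ and has frontier $\mathrm{fr}\, B-\partial\Delta$, so that any later conclusion of the form ``$\ccurve$ is homotopic into the frontier'' transfers back to $B$. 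Iterating disposes of all disk components of $\overline{S-B}$; a frontier circle bounding a disk in $S$ would now force either $B$ or a component of $\overline{S-B}$ to be a disk, so no such circle remains.

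For the main step I would take a free homotopy $F\co S^1\times I\to S$ with $F_0=\ccurve\subset\mathit{int}\, B$ and $F_1$ mapping into $S-B$, make $F$ transverse to $\mathrm{fr}\, B$ with the number of components of $\Gamma:=F^{-1}(\mathrm{fr}\, B)$ as small as possible, and study $\Gamma$, a union of disjoint circles in $\mathit{int}(S^1\times I)$. It is nonempty, since $F(S^1\times I)$ meets both $\mathit{int}\, B$ and $S-B$, which lie in distinct components of $S-\mathrm{fr}\, B$. The crux is that no component $\delta$ of $\Gamma$ bounds a disk in $S^1\times I$: an innermost such disk $D$ has $F(D)$ contained in a single piece $Y$ of $S$ cut along $\mathrm{fr}\, B$, with $F(\delta)$ in a component $c$ of $\mathrm{fr}\, B$ lying in $\partial Y$; thus $Y$ is a compact surface with nonempty boundary and, by the previous paragraph, not a disk, so $\pi_1 Y$ is free and $c$ is nontrivial in it, whence the loop $F|_\delta$ — null-homotopic in $Y$ because it bounds $F|_D$ — must be null-homotopic on $c$ itself; but then one can homotope $F$ on a neighborhood of $D$ meeting $\Gamma$ only in $\delta$ so as to push $F(D)$ off $\mathrm{fr}\, B$, lowering the count and contradicting minimality. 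Granting this, every component of $\Gamma$ is core-parallel in the annulus $S^1\times I$; these components are linearly ordered, and if $\gamma$ is the one cobounding with $S^1\times\{0\}$ a sub-annulus $A_0$ with $\mathit{int}\, A_0\cap\Gamma=\emptyset$, then $F(A_0)$ is connected, disjoint from $\mathrm{fr}\, B$, and meets $\mathit{int}\, B$, hence lies in $B$. Then $F|_{A_0}$ is a homotopy in $B$ from $\ccurve$ to $F|_\gamma\subset\mathrm{fr}\, B$, which is what we want.

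The main obstacle, I expect, is exactly this innermost-disk surgery together with the second paragraph that licenses it: if some frontier circle compressed into the side an innermost disk faces, $F$ could carry a strand through the disk that circle bounds, and this strand could not be retracted, so the count could not be reduced. Eliminating disk components of $\overline{S-B}$ (and the case $\ccurve\simeq 1$ in $S$) is precisely what prevents that; doing these reductions while keeping straight which surface each intermediate conclusion lives in is where the care is needed.
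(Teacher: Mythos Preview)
Your main transversality argument (third paragraph) is correct and is essentially a more careful version of the paper's own proof: the paper takes the same homotopy $H$, makes it transverse to $\mathrm{fr}\,B$, observes that $H^{-1}(\mathrm{fr}\,B)$ separates the two ends and hence contains a core-parallel circle $\acurve$, and then simply restricts $H$ to the sub-annulus between $S^1\times\{0\}$ and $\acurve$. Where the paper asserts this restriction ``yields the desired homotopy,'' you minimize and run an innermost-disk argument so that the sub-annulus genuinely maps into $B$; that extra care is warranted.

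However, your second paragraph contains real errors, and in fact the lemma \emph{as literally stated} is false even for embedded $\ccurve$. Take $S$ a torus, $B=S$ minus two small open disks, and $\ccurve\subset B$ a simple closed curve separating the two boundary circles $c_1,c_2$ from the handle. Then $\ccurve$ bounds a disk $D$ in $S$ (the pair of pants $P$ union the two complementary disks), so $\ccurve$ is null-homotopic in $S$ and homotopic into $S-B$; but in $\pi_1 B$ the class of $\ccurve$ is not conjugate to any power of $c_1$ or $c_2$, so $\ccurve$ is not homotopic in $B$ into $\mathrm{fr}\,B$. Your argument breaks exactly here: $D\cap\mathrm{fr}\,B=c_1\cup c_2$ has \emph{two} outermost components, neither cobounding with $\ccurve$ an annulus lying in $B$ (they cobound the pair of pants $P$). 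The other reduction --- ``$B\cup\Delta$ deformation retracts to $B$ rel $B$'' --- is also false: capping a boundary circle with a disk kills a generator of $\pi_1 B$, and a homotopy in $B\cup\Delta$ need not push back into $B$.

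The fix is the hypothesis you already flag parenthetically: assume $B$ incompressible (equivalently, every component of $\mathrm{fr}\,B$ essential in $S$), which holds in every application of the lemma in the paper. Under this hypothesis your paragraph-2 reductions are unnecessary --- an essential curve in $B$ is essential in $S$, and no complementary piece is a disk --- and your paragraph-3 argument goes through verbatim, since each frontier circle is $\pi_1$-injective in the adjacent piece $Y$, so $F|_\delta$ null-homotopic in $Y$ forces it null-homotopic in $c$. The distinction you draw between embedded and general $\ccurve$ is therefore not the crux: incompressibility of $B$ is what is actually needed in both cases.
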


\begin{proof}  After pushing off of boundaries, we will assume that $B \subset \mathrm{int}\, S$ and $f$ maps into $\mathrm{int}\, B$.  Choose a base point for $\pi_1 S$ in $B$, and let $p\co\tilde{S}\to \mathrm{int}\,S$ be the cover corresponding to $\pi_1 B$.  If $\tilde{B}$ is the image in $\tilde{S}$ of the lift of the inclusion map $B\hookrightarrow S$ then every component of $\tilde{S}-\mathrm{int}\,\tilde{B}$ is homeomorphic to a half-open annulus, and there is a retraction $r\co \tilde{S}\to\tilde{B}$ that takes each such component to a component of $\partial \tilde{B}$.  The homotopy of $f$ out of $B$ lifts to a homotopy $\tilde{H}$ whose time-$0$ map has its image in $\tilde{B}$.  The time-$1$ map $\tilde{H}_1$ has its image in $\tilde{S}-\mathrm{int}\,\tilde{B}$, so $p\circ r\circ\tilde{H}$ is a homotopy of $f$ in $B$ to a map with its image in $\partial B$.\end{proof}

%%%%%%%%%%%%%%%%%%
\section{Cylinders have bounded length}\label{shorter cylinders}
%%%%%%%%%%%%%%%%%%

This section is dedicated to proving Theorem \ref{bdd length}:

\begin{thm}\label{bdd length}\BddLength\end{thm}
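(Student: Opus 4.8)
The plan is to carry out the two-step argument sketched in the introduction, building the sequence $\Psi_1 \supset \Psi_2 \supset \cdots$ of subsurfaces of $S$ by enlarging the subsurfaces $\Phi_k$ from \cite{BCSZ}. First I would set up the machinery of the characteristic submanifold $\Sigma$ of $X$, the manifold obtained by cutting $M$ along $S$. Every essential basic homotopy $H^i\co (S^1\times I, S^1\times\partial I)\to(X,\partial X)$ is, up to homotopy, carried by $\Sigma$; since $X$ is not an $I$-bundle (as $S$ is neither a fiber nor a semi-fiber, $M$ is hyperbolic), the relevant pieces of $\Sigma$ are $I$-bundles over subsurfaces of $\partial X$ and Seifert-fibered pieces, including solid tori. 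For the $I$-bundle pieces, the associated surface-to-surface map on $\partial X$ records how $H_0$ determines $H_1$; for the Seifert pieces meeting $\partial X$ in annuli, $H_0$ lands in a union of annuli and the allowed $H_1$'s are governed by the fibered structure. The subsurface $\Phi_k\subset S$ should be defined recursively: $\Phi_1$ is (a representative of) the large part of $\partial_h\Sigma$, and $\Phi_{k+1}$ is obtained from $\Phi_k$ by intersecting (via the essential intersection $\essint$ of Definition \ref{the real essential intersection}) its image under the ``flip'' maps coming from $I$-bundle pieces with $\Phi_1$ again, so that $\Phi_k$ carries time-$0$ maps of large reduced homotopies of length $k$. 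The new content beyond \cite{BCSZ} is to augment each $\Phi_k$ by the annuli in $S$ that carry time-$0$ maps of reduced homotopies of length $k$ that pass through solid-torus or annular Seifert pieces; call the result $\Psi_k$. Proposition \ref{essential intersection} and Lemma \ref{homotopy to iso} guarantee these are well-defined up to isotopy and nested, and Proposition \ref{pushing homotopy} is what lets us track, through a solid-torus piece whose boundary pattern has parallel annuli, whether $H_1$ can be pushed off onto a different frontier annulus than the one $H_0$ meets.

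Next I would prove the key dichotomy: if $H$ is a non-degenerate reduced homotopy of length $k$ in $(M,S)$, then $H_0$ is homotopic in $S$ into $\Psi_k$. The proof is induction on $k$. Decompose $H$ into essential basic homotopies $H^1,\dots,H^k$; by the characteristic submanifold theory each $H^i$ is homotopic (rel the condition of starting/ending on the prescribed sides) into a piece $W_i$ of $\Sigma$. If $W_1$ is an $I$-bundle, then $H^1$ ending on the opposite side forces $H^1_1$ to equal the image of $H^1_0$ under the bundle involution, and the tail $H^2,\dots,H^k$ is a reduced homotopy of length $k-1$ whose time-$0$ map is $H^1_1$; apply the inductive hypothesis and pull back through the involution, landing in $\Phi_k\subset\Psi_k$. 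If $W_1$ is Seifert-fibered (in particular a solid torus), then $H^1_0$ lands in a frontier annulus, and using Proposition \ref{pushing homotopy} together with the fibered structure one shows $H^1_1$ — and hence the time-$0$ map of the length-$(k-1)$ tail — is homotopic into one of the finitely many frontier annuli of $W_1$; the inductive hypothesis handles the tail and one checks the relevant annulus was built into $\Psi_k$. The base case $k=1$ is just the statement that an essential basic homotopy has time-$0$ map carried by $\partial_h\Sigma$, i.e.\ by $\Psi_1$.

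Then comes the expiration step, which I expect to be the main obstacle. One must show that whenever $\Psi_k\neq\emptyset$, the surface $\Psi_k$ is \emph{not} homotopic in $S$ into $\Psi_{k+2}$. Equivalently, by Lemma \ref{homotopy to iso}(\ref{isotopic to equal}), $\Psi_{k+2}$ is a \emph{proper} subsurface of $\Psi_k$ (up to isotopy) as long as $\Psi_k\neq\emptyset$. Combined with the fact that complexity $\chi_-$ is a nonnegative integer that strictly drops every two steps, this forces $\Psi_k=\emptyset$ for $k$ large, and then the previous step says reduced homotopies of that length cannot exist; a careful bookkeeping of how much complexity can be lost — an $I$-bundle step over a genus-$g$ surface, and the annulus corrections — yields the explicit bound $14g-12$. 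The heart of the obstacle is ruling out $\Psi_k \simeq \Psi_{k+2}$: this is where hyperbolicity of $M$ and the hypothesis that $S$ is not a fiber or semi-fiber must be used. If $\Psi_k$ were homotopic into $\Psi_{k+2}$, one would splice together the length-$k$ homotopy realizing $\Psi_k$ with its mirror image to produce a reduced homotopy that can be iterated indefinitely, giving an infinite-order cylinder; by Lemma \ref{chopped annulus} this produces a $\mathbb Z\oplus\mathbb Z$ or a non-peripheral essential torus or annulus in $M$, or forces the edge group $\pi_1 S$ to fix an arc of every length, which (as noted after the statement of Corollary \ref{cylinders}) happens only when $S$ is a fiber or semi-fiber — contradicting the hypotheses, or contradicting hyperbolicity. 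Making the ``splice and iterate'' construction precise, and extracting the exact constant, is the part requiring the most care; the rest is an organized application of the results of Sections \ref{cylinders vs cylinders} and \ref{essential} together with standard characteristic-submanifold theory.
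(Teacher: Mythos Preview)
Your overall architecture matches the paper's: build $\Psi_k$ by adjoining annuli to the $\Phi_k$ of \cite{BCSZ} (this is Proposition~\ref{carriers}), prove the expiration statement (Proposition~\ref{shrinkage}), and count. But two of your steps, as written, do not go through.

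\medskip
\noindent\textbf{The complexity count.} Your claim that ``$\chi_-$ is a nonnegative integer that strictly drops every two steps'' is false: if $\Psi_{k+2}$ differs from $\Psi_k$ only by discarding annular components, then $\chi_-(\Psi_{k+2})=\chi_-(\Psi_k)$, since annuli have $\chi_-=0$. So $\chi_-$ alone cannot terminate the sequence, let alone produce the constant $14g-12$. The paper separates the count into two phases. First, on the large part $\Phi_k^{\epsilon}=(\Psi_k^{\epsilon})_{\call}$ one uses the BCSZ complexity $c(A)=g(A)-\tfrac32\chi(A)-|A|$, which \emph{does} strictly decrease along $\Phi_1^{\epsilon}\supset\Phi_3^{\epsilon}\supset\cdots$ (these have even Euler characteristic by \cite[Cor.~5.3.8]{BCSZ}); since $c(S)=4g-4$, this gives $\Phi_{2i+1}^{\epsilon}=\emptyset$ once $i>m_S=4g-4$. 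Second, once $\Psi_{2i+1}^{\epsilon}$ is a union of irredundant annuli, it has at most $n_S=3g-3$ components, and Proposition~\ref{shrinkage} forces the component count to drop by at least one every two steps. Together this yields $\Psi_{2i+1}^{\epsilon}=\emptyset$ for $i>m_S+n_S$, i.e.\ length $\leq 2(m_S+n_S)+2=14g-12$. (In the non-separating case the paper first replaces $S$ by the boundary $\tilde S$ of a regular neighborhood, a separating two-component surface; you have not addressed this case.)

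\medskip
\noindent\textbf{The expiration step.} Your proposed mechanism---splice with a mirror, iterate, then invoke Lemma~\ref{chopped annulus}---is both imprecise and misdirected. Splicing a homotopy with its time-reversal does not yield a \emph{reduced} homotopy: at the splice the two pieces start on the same side of $S$, violating the alternation condition. And appealing to Lemma~\ref{chopped annulus} here reverses the logical flow of the paper (that lemma is used to deduce Corollary~\ref{cylinders} \emph{from} Theorem~\ref{bdd length}). The paper's argument is more direct and handles exactly the case your $\chi_-$ misses: once $\Psi_k^{\epsilon}$ consists only of annuli and is homotopic into $\Psi_{k+2}^{\epsilon}$, one obtains a length-$2$ reduced homotopy from $\Psi_{k+2}^{\epsilon}$ back into itself. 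Recording which core goes to which gives a finite directed graph with out-degree one at every vertex, hence a cycle; gluing the length-$2$ annular homotopies around the cycle produces an \emph{essential map of a torus} into $M$, contradicting hyperbolicity. No $\mathbb{Z}\oplus\mathbb{Z}$ subgroup or infinite-cylinder argument is needed.
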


The proof uses the \textit{characteristic submanifold} of the manifold $X$ obtained by cutting $M$ along $S$, which has the key property that it captures all non-trivial homotopies in $X$.  We recall its definition below.

Say a $3$-manifold $X$ with boundary is \textit{simple} if:\begin{itemize}
\item $X$ is compact, connected, orientable, irreducible and boundary-irreducible;
\item no subgroup of $\pi_1(X)$ is isomorphic to $\mathbb{Z}\times\mathbb{Z}$; and
\item  $X$ is not a closed manifold with finite fundamental group.\end{itemize}
For a closed, orientable hyperbolic $3$-manifold $M$ containing an incompressible surface $S$, each component of the manifold obtained by cutting $M$ along $S$ is simple.

Below, an \textit{essential annulus} in a $3$-manifold $X$ with boundary is the image of an essential, non-degenerate homotopy $(S^1\times I,S^1\times\partial I)\to (M,S)$ (recall Definition \ref{basic stuff}) that is embedding.  If $P$ is an $I$-bundle over a surface $F$, we let $\partial_h P$ denote the associated $\partial I$-bundle, the \textit{horizontal} boundary of $P$, and denote by $\partial_v P$ (the \textit{vertical} boundary) the $I$-bundle over $\partial F$.

\begin{thm1}[Jaco--Shalen \cite{JaS}, Johansson \cite{Jo}]  Let $X$ be a simple $3$-manifold with nonempty boundary.  Up to ambient isotopy, its characteristic submanifold $\Omega$ is the unique compact
submanifold of $X$ with the following properties.
\begin{enumerate}
\item\label{thermometer} Every component of $\Omega$ is either an $I$-bundle $P$ over a
surface such that $P\cap\partial X=\partial_hP$, or a solid torus $S$ such that $S\cap\partial X$ is a collection of disjoint, embedded annuli in $\partial S$ that are homotopically non-trivial in $S$.
\item\label{sushi} Every component of the frontier of $\Omega$ is an
essential annulus in $X$.
\item\label{cheesequake} No component of $\Omega$ is ambiently isotopic in $X$ to a
submanifold of another component of $\Omega$.
\item\label{big ol' hash} If $\Omega_1$ is a compact
submanifold of $X$ such that (\ref{thermometer}) and (\ref{sushi}) hold with $\Omega_1$
in place of $\Omega$, then $\Omega_1$ is ambiently isotopic in $X$ to
a submanifold of $\Omega$.
\end{enumerate}
If $K$ is a polyhedron and $H\co (K \times I,K\times\partial I) \to (X,\partial X)$ is an essential, non-degenerate map, then $H$ is homotopic into $(\Omega,\Omega\cap\partial X)$.\end{thm1}

Let the \textit{characteristic set} of $X$ be $\Omega \cap \partial X$.  If $X$ is a component of the manifold obtained by cutting $M$ along $S$ then by the JSJ theorem its characteristic set carries a homotopic image of the time-$0$ map of any essential basic homotopy in $(M,S)$ (recall Definition \ref{BCSZ stuff}) that intersects $X$.  The first main result of this section identifies a sequence of subsurfaces that play a role analogous to the characteristic set for homotopies with length $k \geq 1$.  This extends Proposition 5.2.8 of \cite{BCSZ}.

Before we state the result, we translate Definition 5.1.1 of \cite{BCSZ} into our context.

\begin{dfn}\label{splitting}  A \textit{splitting surface} in a closed, orientable hyperbolic $3$-manifold $M$ is a transversely oriented, incompressible surface $S\subset M$ such that the manifold obtained by cutting $M$ along $S$ is a disjoint union of submanifolds $X^{\pm 1}$ with the property that $\caln_{\epsilon}\subset X^{\epsilon}$ for each $\epsilon\in\{\pm 1\}$, for $\caln_{\epsilon}$ as in Definition \ref{BCSZ stuff}.\end{dfn}

Separating, connected, two-sided incompressible surfaces are splitting surfaces, but note that $S$ is not required above to be connected.  In fact, given a non-separating connected, two-sided incompressible surface $S_0$ in $M$, the boundary $S$ of a regular neighborhood $\caln_0$ of $S_0$ becomes a splitting surface upon taking $X^{-1} = \caln_0$ and $X^{+1} = \overline{M-\caln_0}$, and giving each component of $S$ the transverse orientation pointing out of $X^{-1}$.

\begin{prop}\label{carriers}  Let $M$ be a closed, orientable hyperbolic $3$-manifold and $S\subset M$ a splitting surface, and decompose the manifold obtained by cutting $M$ along $S$ into submanifolds  $X^{\pm 1}$ as in Definition \ref{splitting}.  For each $\epsilon \in \{\pm 1\}$ there is a sequence of essential (possibly empty) subsurfaces $(\Psi_k^{\epsilon})_{k \in \mathbb{N}}$ of $S$, such that $\Psi_1^{\epsilon} \subset \Omega^{\epsilon} \cap \partial X^{\epsilon}$, where $\Omega^{\epsilon}$ is the characteristic submanifold of $X^{\epsilon}$, and for each $k \in \mathbb{N}$ we have:  \begin{enumerate}
\item\label{containment}  $\Psi_k^{\epsilon} \supset \Psi_{k+1}^{\epsilon}$; and 
\item\label{eats homotopies}  If $K$ is a polyhedron with $\pi_1 K\neq \{1\}$ and $H\co K\times I \to M$ is a reduced homotopy in $(M,S)$ of length $k$, starting on the $\epsilon$ side, then $H_0$ is homotopic in $S$ to a map with image in $\Psi_k^{\epsilon}$.  Conversely, for such a polyhedron $K$ if $f\co K\to S$ is $\pi_1$-injective and homotopic into $\Psi_k^{\epsilon}$ then there exists such a homotopy $H$ with $H_0=f$; and
\item\label{large part} $(\Psi_k^{\epsilon})_{\call} = \Phi_k^{\epsilon}$, where $\Phi_k^{\epsilon}$ is the surface identified in \cite[Proposition 5.2.8]{BCSZ}.  \end{enumerate}
A surface with the properties above is determined up to isotopy in $S$ by the requirement that it be irredundant.  \end{prop}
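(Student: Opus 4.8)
The plan is to construct the sequences $(\Psi_k^\epsilon)$ by induction on $k$, building $\Psi_k^\epsilon$ from the surface $\Phi_k^\epsilon$ of \cite[Proposition 5.2.8]{BCSZ} by adjoining carefully chosen annuli, so as to enforce property (\ref{large part}) by fiat while ensuring (\ref{eats homotopies}). First I would set up the base case $k=1$: an essential basic homotopy in $(M,S)$ starting on the $\epsilon$ side has its time-$0$ map factoring through $X^\epsilon$, so by the last sentence of the JSJ theorem it is homotopic into the characteristic set $\Omega^\epsilon \cap \partial X^\epsilon$. Conversely, the results on the characteristic submanifold show that every component of $\Omega^\epsilon \cap \partial X^\epsilon$ carries such a homotopy. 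So $\Psi_1^\epsilon$ should be the irredundant subsurface of $\Omega^\epsilon \cap \partial X^\epsilon$ (obtained by discarding redundant and compressible components) and one checks $(\Psi_1^\epsilon)_{\call} = \Phi_1^\epsilon$ by comparing with the construction in \cite{BCSZ}. This gives (\ref{containment}) vacuously at the first step and pins down $\Psi_1^\epsilon$.

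For the inductive step, suppose $\Psi_k^\epsilon$ is constructed. A reduced homotopy of length $k+1$ starting on the $\epsilon$ side is a composition of a basic essential homotopy $H^1$ (starting on the $\epsilon$ side, ending on the $-\epsilon'$ side for the appropriate sign) with a reduced homotopy of length $k$ starting on the $\epsilon'$ side; so $H_0$ is homotopic in $S$ to the time-$0$ map of $H^1$, and $H^1$'s time-$1$ map must be homotopic into $\Psi_k^{-\epsilon}$ by the inductive hypothesis applied to the tail. Thus $H_0$ is homotopic into the set of time-$0$ maps of essential homotopies in $X^\epsilon$ (equivalently, homotopies with target $(M,S)$ in $\caln_\epsilon$) whose time-$1$ map is homotopic into $\Psi_k^{-\epsilon}$. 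By the JSJ theorem this time-$0$ data is captured by the characteristic submanifold $\Omega^\epsilon$: a homotopy through an $I$-bundle component of $\Omega^\epsilon$ has its time-$1$ map determined by its time-$0$ map (via the $I$-bundle structure), so such components contribute exactly $\Psi_k^{-\epsilon} \essint (\text{horizontal boundary piece})$-type intersections, while solid torus components contribute annuli whose cores can be homotoped to any power on the time-$1$ side. The large part of this carrier is precisely $\Phi_{k+1}^\epsilon$, by the corresponding argument in \cite[\S5]{BCSZ}; the annular part we adjoin by hand, using Proposition \ref{essential intersection} and Lemma \ref{homotopy to iso} to make the construction canonical. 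I would then define $\Psi_{k+1}^\epsilon$ to be the irredundant representative of the resulting subsurface, which by Proposition \ref{essential intersection} is well-defined up to isotopy.

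Properties (\ref{containment}) and (\ref{large part}) will follow directly from the construction: containment because every reduced homotopy of length $k+1$ truncates to one of length $k$ (drop the last basic homotopy), so $\Psi_{k+1}^\epsilon$ is homotopic into $\Psi_k^\epsilon$, hence isotopic into it by Lemma \ref{homotopy to iso}(\ref{isotopic into}) once both are taken irredundant; and (\ref{large part}) because the large part of the carrier constructed above agrees with $\Phi_{k+1}^\epsilon$, which requires only citing and lightly extending \cite[Proposition 5.2.8]{BCSZ}. The uniqueness-up-to-isotopy statement follows from Lemma \ref{homotopy to iso}(\ref{isotopic to equal}): any two irredundant subsurfaces satisfying (\ref{eats homotopies}) are homotopic into each other, hence isotopic. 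The main obstacle, which is exactly the difficulty flagged in the introduction, is the bookkeeping for solid torus components of $\Omega^\epsilon$: there the time-$0$ map does not determine the time-$1$ map, so one cannot simply say ``$H_0$ homotopic into $\Psi_k^{-\epsilon}$ forces $H_1$ homotopic into $\Psi_k^{-\epsilon}$.'' Handling this correctly is why the annuli are adjoined separately to the $\Phi_k^\epsilon$ rather than obtained as an essential intersection outright, and making precise which annuli to add — those whose cores are homotopic in $S$ to a core of a component of the characteristic set meeting $\Psi_k^{-\epsilon}$ on the far side — is where the real care is needed. I expect the verification that this prescription yields exactly property (\ref{eats homotopies}), in both directions, to be the longest and most delicate part of the argument.
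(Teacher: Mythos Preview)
Your outline is correct and follows the same broad inductive strategy as the paper: take $\Psi_1^\epsilon$ to be the irredundant part of the characteristic set, and at each later stage set $(\Psi_k^\epsilon)_{\call} = \Phi_k^\epsilon$ and adjoin annuli by hand. The uniqueness argument via Lemma~\ref{homotopy to iso}(\ref{isotopic to equal}) is exactly what the paper uses.

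The organization of the inductive step, however, differs in a way worth noting. You decompose a length-$(k{+}1)$ homotopy as (first basic step) $+$ (length-$k$ tail) and try to describe $\Psi_{k+1}^\epsilon$ as the ``preimage'' of $\Psi_k^{-\epsilon}$ under a single basic homotopy, using $\tau_\epsilon$ on $I$-bundle pieces. The paper instead decomposes as (length-$(m{-}1)$ head) $+$ (last basic step): it first uses the induction hypothesis to place $H_0$ in $\Psi_{m-1}^\epsilon$, then exploits the homeomorphism $h_{m-1}^\epsilon\co\Phi_{m-1}^\epsilon\to\Phi_{m-1}^{(-1)^m\epsilon}$ from \cite[Proposition 5.3.1]{BCSZ} to transport curves in the interior of $\Phi_{m-1}^\epsilon$ all the way to step $m{-}1$, where the question reduces to a single essential intersection $P_m^\epsilon = \Phi_{m-1}^{(-1)^m\epsilon}\essint\Psi_1^{(-1)^{m+1}\epsilon}$. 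This yields an explicit trichotomy of annuli to adjoin --- annular components of $\Psi_{m-1}^\epsilon$ admitting a length-$m$ homotopy, neighborhoods of frontier curves of $\Phi_{m-1}^\epsilon$ with the same property, and $(h_{m-1}^\epsilon)^{-1}$ of the non-boundary-parallel annular components of $P_m^\epsilon$ --- and the verification of (\ref{eats homotopies}) is a case analysis driven by two short lemmas: one (Lemma~\ref{stronger uniqueness}) showing that for curves not homotopic into $\partial\Phi_{m-1}^\epsilon$ the time-$(m{-}1)$ image is \emph{determined} by $h_{m-1}^\epsilon$, and one (Lemma~\ref{along for the ride}) allowing passage to primitives. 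The advantage of the paper's route is that $h_{m-1}^\epsilon$ compresses $m{-}1$ steps into a single homeomorphism already supplied by \cite{BCSZ}, so the solid-torus ambiguity only needs to be confronted once (at the last step) rather than iterated; your step-by-step pull-back would work, but making your annulus prescription precise amounts to reproving these lemmas in a slightly different form.
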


Below we will briefly review some of definitions and results proved in Section 5 of \cite{BCSZ}. These were proven there under the hypothesis that $M$ is a \textit{knot manifold}, with a single torus boundary component, whereas we take $M$ closed.  However, they depend only on the results on large intersection developed in \cite[\S 4]{BCSZ} and basic facts about $I$-bundles, and so carry over to our context without alteration.  The blanket hypotheses below are those of Proposition \ref{carriers}; in each case we paraphrase the result or definition from \cite{BCSZ} that is referenced.\begin{description}
  \item[5.2.1]  Let $(\Sigma^{\epsilon},\Phi^{\epsilon})$ be the $(I,\partial I)$-bundle pair that is the union of all $I$-bundle components of the characteristic submanifold of $X^{\epsilon}$.  
  \item[Proposition 5.2.8]  There is a sequence $\{\Phi_1^{\epsilon}\supset\Phi_2^{\epsilon}\supset\hdots\}$ of large subsurfaces of $\Phi^{\epsilon}$, with $\Phi_1^{\epsilon} = (\Phi^{\epsilon})_{\call}$, that satisfies property (\ref{eats homotopies}) of Proposition \ref{carriers} with the hypothesis that $\pi_1 K\neq\{1\}$ replaced by the assertion that $H_0$ is large.  The $\Phi_i^{\epsilon}$ are determined up to isotopy by this property.
  \item[Proposition 5.3.1]  There is a homeomorphism $h_k^{\epsilon} \co \Phi_k^{\epsilon} \to \Phi_k^{(-1)^{k+1}\epsilon}$, for each $k\in\mathbb{N}$, such that if $H \co K\times I \to M$ is a reduced homotopy of length $k$ starting on the $\epsilon$-side with large time-$0$ map, then there exists $f\co K \to \Phi_k^{\epsilon}$ such that $H_0$ is homotopic to $f$ and $H_1$ to $h_k^{\epsilon} \circ f$.
\end{description}
To motivate the existence of $h_k^{\epsilon}$, we note that the analog of Proposition \ref{carriers}(\ref{eats homotopies}) implies the inclusion of $\Phi_k^{\epsilon}$ is the time-$0$ map of a length-$k$ homotopy $H$ with target $(M,S)$.  Since $H$ is length-$k$ the image of $H_1$ lies in $\partial X^{(-1)^{k+1}\epsilon}$, and since $H$ can be run backwards this image is homotopic into $\Phi_k^{(-1)^{k+1}\epsilon}$.

The precise definition of the $h_k^{\epsilon}$ is as follows.  Let $\tau_{\epsilon}$ be the fixed-point free involution of $\Phi^{\epsilon}$ that exchanges the endpoints of $I$-fibers.  Then $h_1^{\epsilon}$ is defined to be the restriction of $\tau_{\epsilon}$ to $\Phi_1^{\epsilon}$.  For $k>1$, $h_{k}^{\epsilon}$ is defined recursively by composing $\tau_{\pm \epsilon}$ with a homotope of the restriction of $h_{k-1}^{\epsilon}$.  In \cite{BCSZ} it is proven:\begin{description}
\item[Proposition 5.3.4]  For $k>1$, $h_{k-1}^{\epsilon}|_{\Phi_k^{\epsilon}}$ is homotopic in $S$ to an embedding $g_{k-1}^{\epsilon}\co \Phi_k^{\epsilon} \to \Phi_1^{(-1)^{k-1}\epsilon}$ such that $h_k^{\epsilon}$ is homotopic in $S$ to $\tau_{(-1)^{k-1}\epsilon}\circ g_{k-1}^{\epsilon}$.
\item[Proposition 5.3.5] $h_{k-1}^{\epsilon}(\Phi_k^{\epsilon})$ is isotopic in $S$ to $\left(\Phi_{k-1}^{(-1)^k\epsilon} \essint \Phi_1^{(-1)^{k-1}\epsilon}\right)_{\call}$.\end{description}
The statements above are special cases of the results cited.  Our phrasing of the latter implicitly uses our Proposition \ref{essential intersection} (also see above it, and Definition \ref{the real essential intersection}).

The lemma below is a version of \cite[Lemma 5.2.4]{BCSZ}, where the original hypothesis that the homotopy $H$ in question has large time-$0$ map has been replaced here by the assertion that $H$ maps into $\Sigma^{\epsilon}$.  In the original version this follows from the largeness hypothesis; the remainder of its proof carries through without revision.

The \textit{standard} essential basic homotopy referenced below is from Definition 5.2.3 of \cite{BCSZ}, which in turn refers to the \textit{fundamental} homotopy defined in 5.2.1 there.  For a component $P$ of $\Sigma^{\epsilon}$, which is an $I$-bundle in $X^{\epsilon}$ such that $P\cap\partial X^{\epsilon}$ is the associated $\partial I$-bundle (see above), the fundamental homotopy has domain $P\cap\partial X^{\epsilon}$ and takes $I$-fibers to $I$-fibers.
 
\begin{lemma}\label{vertical}  For $\epsilon \in \{\pm 1\}$ and a polyhedron $K$, let $H \co (K \times I,K\times\partial I) \to (\Sigma^{\epsilon},\Phi^{\epsilon})$ be an essential basic homotopy.  Then $H$ is homotopic as a map of pairs to a standard essential basic homotopy.  In particular, $H_1$ is homotopic in $P$ to $\tau_{\epsilon} \circ H_0$.  \end{lemma}

The lemma below extends the conclusion of \cite[Proposition 5.3.1]{BCSZ} to certain reduced homotopies whose time-$0$ maps are not necessarily large.

\begin{lemma}\label{stronger uniqueness} For $\epsilon \in \{\pm 1\}$ and $k \in \mathbb{N}$, suppose $H$ is a reduced homotopy in $(M,S)$ of length $k$ that starts on the $\epsilon$ side, with domain a polyhedron $K$, such that $H_0$ is homotopic in $S$ into $\Phi_k^{\epsilon}$ but not into $\partial\Phi_k^{\epsilon}$.  Then $H_1$ is homotopic in $S$ to $h_k^{\epsilon} \circ f$, where $H_0$ is homotopic to $f\co K\to\Phi_k^{\epsilon}$.  \end{lemma}

\begin{proof}  We will assume without loss of generality that $K$ is connected, since the desired homotopy can be constructed component-by-component.  We prove the result first for $k=1$; thus assume that $H\co (K\times I,K\times \partial I) \to (X^{\epsilon},\partial X^{\epsilon})$ is an essential basic homotopy.  Applying the JSJ theorem, after homotoping $H$ through maps of pairs to $(X^{\epsilon},\partial X^{\epsilon})$ we will assume that it maps into $\Omega^{\epsilon}$.

Let $P$ be an $I$-bundle component of $\Omega^{\epsilon}$ such that $P\cap\partial X^{\epsilon}\subset\Phi_1^{\epsilon}$ contains the image of a map $f$ homotopic to $H_0$.  Since $f$ is not homotopic into $\partial \Phi_1^{\epsilon}$, Lemma \ref{homotopic out, homotopic in} implies that $f$ is not homotopic out of $P\cap\partial X^{\epsilon}$, so $H_0$ and hence all of $H$ maps into $P$.  Lemma \ref{vertical} now yields the conclusion in this case, since $h_1^{\epsilon} = \tau_{\epsilon}|_{\Phi_1^{\epsilon}}$.

Now take $k>1$ and assume that the lemma holds for all reduced homotopies of length $k-1$.  Given a reduced homotopy $H$ of length $k$ that satisfies the hypotheses, writing $H$ as the composition of essential basic homotopies $H^1,\hdots,H^k$, we have that the composition of $H^1,\hdots,H^{k-1}$ has time-$1$ map homotopic to $h_{k-1}^{\epsilon}\circ f$, where $H_0$ is homotopic to $f\co K\to\Phi_k^{\epsilon}\subset\Phi_{k-1}^{\epsilon}$.

Let $g_{k-1}^{\epsilon} \co \Phi_k^{\epsilon} \to \Phi_1^{(-1)^{k-1}\epsilon}$ be the embedding supplied by \cite[Proposition 5.3.4]{BCSZ}, homotopic to the restriction of $h_{k-1}^{\epsilon}$ and so that $h_k^{\epsilon}$ is homotopic to $\tau_{(-1)^{k-1}\epsilon}\circ g_{k-1}^{\epsilon}$.  Let $P$ be the $I$-bundle component of $\Omega^{(-1)^{k-1}\epsilon}$ such that $g_{k-1}^{\epsilon}\circ f$ maps into $\partial_h P$.  Since $f$ is not homotopic into $\partial \Phi_k^{\epsilon}$, the same holds true for $g_{k-1}^{\epsilon}\circ f$ in $\partial_h P$.

Since $H^k_0 = H^{k-1}_1$ it is homotopic in $S$ to $g_{k-1}^{\epsilon}\circ f$.  It thus follows from the JSJ theorem as in the $k=1$ case that $H^k$ is homotopic as a map of $(I,\partial I)$-bundle pairs into $P$, and furthermore by Lemma \ref{vertical} that $H^k_1$ is homotopic to $\tau_{(-1)^{k-1}\epsilon}\circ g_{k-1}^{\epsilon} \circ f$.  Therefore $H^k_1 = H_1$ is homotopic to $h_k\circ f$, and the lemma follows by induction.
\end{proof}

Because solid torus components of $\Omega$ may have many components of intersection with $\partial X$, no homeomorphism analogous to $h_k^{\epsilon}$ is uniquely defined on $\Psi_k^{\epsilon}$.  But it is still true that every reduced homotopy is tracked by a homotopy of a surface containing the image of its time-$0$ map.

\begin{lemma}\label{along for the ride}  For $\epsilon\in\{\pm 1\}$ and $k\in\mathbb{N}$, suppose $H$ is a reduced homotopy in $(M,S)$ of length $k$ that starts on the $\epsilon$ side, with domain a connected, non-simply connected polyhedron $K$, such that $H_0$ is homotopic in $S$ to a map $f$ with image in an annulus $A \subset (\Omega^{\epsilon} \cap \partial X^{\epsilon})$. There is a reduced homotopy $J$ in $(M,S)$ of length $k$ that starts on the $\epsilon$ side, with domain $A$, such that $H_1$ is homotopic to $J_1\circ f$.\end{lemma}

\begin{proof}  Consider the case that $H \co (K \times I,K\times\partial I) \to (X^{\epsilon}, \partial X^{\epsilon})$ is an essential basic homotopy, for $\epsilon \in \{\pm 1\}$.  By the JSJ theorem, after a  homotopy through maps $(K\times I,K\times \partial I)\to(X^{\epsilon},\partial X^{\epsilon})$ we may assume $H$ maps into some component $P$ of the characteristic submanifold $\Omega^{\epsilon}$.

If the annulus $A$ supplied by the hypotheses does not lie in $P$ then Lemma \ref{homotopic out, homotopic in} implies that $H_0$ is homotopic into a component $\bcurve$ of $\partial (P\cap\partial X^{\epsilon})$.  The subgroups of $\pi_1S$ respectively generated by $\bcurve$ and the core circle $\acurve$ of $A$ thus share the non-trivial subgroup $(H_0)_*(\pi_1 K)$.  Since $S$ is orientable and $\acurve$ and $\bcurve$ are simple this implies they generate identical subgroups, so $\acurve$ and $\bcurve$ are parallel in $S$ by \cite[Lemma 1.4]{Epstein}.  It follows that even if $A$ does not lie in $P\cap \partial X^{\epsilon}$ it is still isotopic into it in $\partial X^{\epsilon}$.

If $P$ is an $I$-bundle component of $\Omega^{\epsilon}$ then applying Lemma \ref{vertical}, after a further homotopy we may assume that $H$ is standard.  If $A$ lies outside of $P$ then by the paragraph above we may homotope $H_0$ so that its image lies in an annular neighborhood $B\subset P\cap\partial X^{\epsilon}$ of $\bcurve$, isotopic to $A$, with the property that $B$ is a component of $\pi^{-1}(\pi(B))\cap\partial X^{\epsilon}$.  (Here $\pi$ is the bundle projection of $P$.)  Since $H$ is standard this determines a homotopy of $H$ to a standard homotopy in the restriction of $\pi$ to $\pi^{-1}(\pi(B))$.

A homotopy of $A$ through $X^{\epsilon}$ is now determined by composing the isotopy $J^0$ from $A$ to $B$ with the restriction $J^1$ to $B$ of the fundamental homotopy of $P\cap\partial X^{\epsilon}$.  This becomes a basic essential homotopy $J$ upon pushing $J^0.J^1\co A\times I\to X^{\epsilon}$ off of $\partial X^{\epsilon}$ on $\mathrm{int}\,I$.  Since $f$ is homotopic to $H_0$ in $\partial X^{\epsilon}$, Proposition \ref{pushing homotopy} now implies that $(J^0)_1\circ f$ is homotopic to $H_0$ in $B$: possibility (2) there does not occur since the components of $\partial B$ are not parallel in $S$, which has genus at least two.  Since $H$ and $J^1$ are standard, it now follows that $J_1\circ f$ is homotopic to $H_1$.

Now suppose $P$ is a solid torus component of $\Omega^{\epsilon}$, and let $B$ and $C$ be the components of $P\cap\partial X^{\epsilon}$ containing the images of $H_0$ and $H_1$, respectively.  As in the previous case, if $A\neq B$ then there is an isotopy $J^0$ from $A$ to $B$, and $H_0$ and $(J^0)_1\circ f$ are homotopic in $B$.  We now require a homotopy $J_1$ from $B$ to $C$ to replace the fundamental homotopy of the previous case.  We construct this below.

Fix a homeomorphic lift $\tilde{B}$ of $B$ to the cover $p\co \tilde{P}\to P$ corresponding to $\pi_1 B$, let $\tilde{H}$ be the lift of $H$ with $\tilde{H}_0(K)\subset\tilde{B}$, and let $\tilde{C}$ be the component of $p^{-1}(C)$ containing the image of $\tilde{H}_1$.  Note that since $C$ is parallel to $B$ on $\partial P$, $\tilde{C}$ is also homeomorphic lift of $C$.  Moreover, since $\tilde{B}$ and $\tilde{C}$ carry $\pi_1 \tilde{P}$ there is a product structure on $\tilde{P}$, $\tilde{P}\cong X\times I$ for an annulus $X$, with $\tilde{B}\cong X\times\{0\}$ and $\tilde{C}\cong X\times\{1\}$.  Restricting the fundamental homotopy of this product structure to $\tilde{B}$ yields a homotopy $\tilde{J}^1\co\tilde{B}\times I \to \tilde{P}$ such that $(\tilde{J}^1)_1$ is a homeomorphism to $\tilde{C}$.

Let $J^1$ be $p\circ\tilde{J}^1$ following the lift $B\to\tilde{B}$ of the inclusion map $B\hookrightarrow P$.  Now define a homotopy $J$ through $X^{\epsilon}$ with domain $A$ by pushing the composition $J^0.J^1$ off of $\partial X^{\epsilon}$ on $\mathrm{int}\,I$.  Lemma \ref{vertical} implies that $\tilde{H}$ is homotopic to a standard homotopy with respect to the product structure on $\tilde{P}$, so since $(J^0)_1\circ f$ is homotopic in $B$ to $H_0$ it follows that $J_1\circ f$ is homotopic in $C$ to $H_1$.

This completes the proof of the essential basic case.  The lemma now follows from this case and induction on the length $k$ of the reduced homotopy.\end{proof}

\begin{proof}[Proof of Proposition \ref{carriers}]  We will prove the proposition by induction.  Let $\Psi_1^{\pm 1}$ be obtained from $\Omega^{\pm 1} \cap \partial X^{\pm 1}$ by discarding redundant annuli, where $\Omega^{\pm 1}$ is the characteristic submanifold of $X^{\pm 1}$.  Property (\ref{eats homotopies}) for $\Psi_1^{\pm 1}$ holds by the enclosing property of the JSJ theorem, and we note that $(\Psi_1^{\pm 1})_{\call} = \Phi_1^{\pm 1}$.

Now let $m\geq 2$ be given, and suppose that for each $\epsilon \in \{\pm 1\}$ we have identified a sequence of subsurfaces
$$ \Psi_1^{\epsilon} \supset \Psi_2^{\epsilon} \hdots \supset \Psi_{m-1}^{\epsilon}, $$
such that for each $k < m$, $\Psi_k$ satisfies (\ref{eats homotopies}) and $(\Psi_k^{\epsilon})_{\call} = \Phi_k^{\epsilon}$. We will further assume (after discarding some annuli if necessary) that $\Psi_k^{\epsilon}$ is irredundant for $k<m$.

Before we define $\Psi_m^{\epsilon}$, we let $P_m^{\epsilon}$ be a subsurface of $\Phi_{m-1}^{(-1)^{m}\epsilon}$ representing: 
$$\Phi_{m-1}^{(-1)^{m}\epsilon} \essint \Psi_1^{(-1)^{m+1}\epsilon}$$ 
By Proposition \ref{essential intersection}, $(P_m^{\epsilon})_{\call}$ is maximal among large surfaces of $\Phi_{m-1}^{(-1)^m\epsilon}$ that admit a homotopy of length one starting on the $(-1)^{m+1}\epsilon$-side.  If a large subsurface $A$ of $\Phi_{m-1}^{(-1)^m\epsilon}$ admits an essential homotopy of length one starting on the $(-1)^{m+1}\epsilon$-side, then $(h_{m-1}^{\epsilon})^{-1}(A)$ admits a homotopy of length $m$ starting on the $\epsilon$-side; thus \cite[Proposition 5.2.8]{BCSZ} implies that $h_{m-1}^{\epsilon}(\Phi_m^{\epsilon})$ has the same maximality property as $(P_m^{\epsilon})_{\call}$.  Therefore by Proposition \ref{homotopy to iso}(\ref{isotopic to equal}), these are isotopic subsurfaces of $\Phi_{m-1}^{(-1)^m\epsilon}$.

We now define $\Psi_m^{\epsilon} = \Phi_m^{\epsilon} \cup \left(\bigcup A_i \right)\cup \left(\bigcup B_j\right) \cup \left( \bigcup C_k \right)$, where the $A_i$, $B_j$, and $C_k$ are annuli defined as follows:  \begin{enumerate}
\item  Let $\{A_i\}$ be the set of annular components of $\Psi_{m-1}^{\epsilon}$ that admit a reduced homotopy of length $m$.
\item  Let $\{\mathfrak{b}_j\}$ be the set of components of the frontier in $S$ of $\Phi_{m-1}^{\epsilon}$ such that $\mathfrak{b}_j$ is not isotopic into $\Phi_m^{\epsilon}$ but $\mathfrak{b}_j$ admits a reduced homotopy of length $m$, and for each $j$ let $B_j$ be a regular neighborhood of $\mathfrak{b}_j$ in $\Phi_{m-1}^{\epsilon} - \mathrm{int}\,\Phi_m^{\epsilon}$.  
\item  Let $\{C'_k\}$ be the set of annular components of $P_m^{\epsilon}$ that are not boundary parallel in $\Phi_{m-1}^{(-1)^{m}\epsilon}$.  For each $k$, let $C_k$ be an annulus isotopic in $\Phi_{m-1}^{\epsilon}$ to $(h_{m-1}^{\epsilon})^{-1}(C'_k)$ and disjoint from $\Phi_m^{\epsilon} \cup \bigcup B_j$.
\end{enumerate}

Properties (\ref{containment}) and (\ref{large part}) are clear from this construction.  Since $\Psi_m^{\epsilon}$ admits a reduced homotopy of length $m$ by construction, it remains only to show for a reduced homotopy $H\co (K \times I,K\times\partial I)\to (M,S)$ of length $m$ that $H_0$ is homotopic into $\Psi_m^{\epsilon}$.
%If $H_0 \co K \to S$ is large, then this holds by \cite[Proposition 5.2.8]{BCSZ} and the fact that $\Phi_m^{\epsilon}\subset\Psi_m^{\epsilon}$, so using Lemma \ref{not large}, we will assume $K = S^1$.

Write $H$ as a composition of essential basic homotopies $H^1,\hdots,H^m$.  Since the composition $H^1.H^2\hdots H^{m-1}$ is a reduced homotopy of length $m-1$, by hypothesis $H_0 = (H^1)_0$ is homotopic into $\Psi_{m-1}^{\epsilon}$.  If $H_0$ is homotopic into an annular component of $\Psi_{m-1}^{\epsilon}$, then by Lemma \ref{along for the ride}, this component admits a reduced homotopy of length $k$; hence it is of the form $A_i$ for some $i$.  We thus assume below that this does not hold, hence that $H_0$ is homotopic into $\Phi_{m-1}^{\epsilon}$.

If $H_0$ is homotopic into $\Phi_m^{\epsilon}$ then we are done, so let us assume this is not the case.  In particular, by Proposition 5.2.8 of \cite{BCSZ}, $H_0$ is not large. If $H_0$ is homotopic into a boundary curve of $\Phi_{m-1}^{\epsilon}$ that is not homotopic into $\Phi_m^{\epsilon}$, then by Lemma \ref{along for the ride} again, the corresponding boundary component is of the form $\mathfrak{b}_j$ for some $j$.

By the paragraph above we may assume that $H_0$ is homotopic into $\Phi_{m-1}^{\epsilon}$ but not into $\partial \Phi_{m-1}^{\epsilon}$.  Lemma \ref{stronger uniqueness} therefore implies that $(H^{m-1})_1$ is homotopic to $h_{m-1}^{\epsilon}\circ f \subset \Phi_{m-1}^{(-1)^{m}\epsilon}$ in $S$, where $f\co K\to\Phi_{m-1}^{\epsilon}$ is homotopic to $H_0$.  It follows that $h_{m-1}^{\epsilon}\circ f$ admits an essential homotopy of length one, hence by Proposition \ref{essential intersection} it is homotopic into a component $C'$ of $P_m^{\epsilon} \subset \Phi_{m-1}^{\epsilon}$.

If $h_{m-1}^{\epsilon}\circ f$ were not homotopic into $C'$ in $\Phi_{m-1}^{(-1)^m\epsilon}$, then Proposition \ref{pushing homotopy} would imply in particular that it is homotopic in $\Phi_{m-1}^{(-1)^m\epsilon}$ into a boundary component.  But then $f$, and hence $H_0$ would be homotopic to a boundary component of $\Phi_{m-1}^{\epsilon}$, contradicting our assumption.  Hence $h_{m-1}^{\epsilon}\circ f$ is homotopic into $C'$ in $\Phi_{m-1}^{(-1)^m\epsilon}$.

Recalling from above that $(P_m^{\epsilon})_{\call}$ is isotopic in $\Phi_{m-1}^{(-1)^m\epsilon}$ to $h_{m-1}^{\epsilon}(\Phi_m^{\epsilon})$, we find that $C'$ is an annulus since we have assumed $H_0$ is not homotopic into $\Phi_m^{\epsilon}$.  Therefore $C'$ is of the form $C'_k$ for some $k$, and we are in case (3) above.
\end{proof}

The second main result of this section asserts that the sequence $\{\Psi_k^{\epsilon}\}$ is shrinking.  We cannot hope to establish that $\Psi_k^{\epsilon}$ is properly larger than $\Psi_{k+1}^{\epsilon}$ for each $k$.  Indeed, in the case of interest to us (when $S$ is the boundary of a regular neighborhood of a non-separating surface) $\Psi_k^{\epsilon}$ is identical to $\Psi_{k+1}^{\epsilon}$ for each odd or even $k$ (depending on $\epsilon$).  Instead we obtain the following extension of \cite[Proposition 5.3.9]{BCSZ}.

\begin{prop}\label{shrinkage}  Let $M$ be a closed, orientable hyperbolic $3$-manifold and $S\subset M$ a splitting surface that is not a fiber or a semi-fiber, and decompose the manifold obtained by cutting $M$ along $S$ into submanifolds $X^{\pm 1}$ as in Definition \ref{splitting}.  For $\epsilon \in\{\pm 1\}$, let $\Psi_1^{\epsilon} \supset \Psi_2^{\epsilon} \supset \cdots$ be a sequence of irredundant surfaces that satisfy Theorem \ref{carriers}.  Then for each $k$, $\Psi_k^{\epsilon}$ is not homotopic into $\Psi_{k+2}^{\epsilon}$.  \end{prop}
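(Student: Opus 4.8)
Assume $\Psi_k^{\epsilon}\neq\emptyset$ (there is nothing to prove otherwise) and, toward a contradiction, that $\Psi_k^{\epsilon}$ is homotopic into $\Psi_{k+2}^{\epsilon}$. The plan is first to reduce to the case that all the surfaces in sight are unions of essential annuli. The inclusion $\Phi_k^{\epsilon}=(\Psi_k^{\epsilon})_{\call}\hookrightarrow S$ is $\pi_1$-injective and large on each component, and composing it with the hypothesized homotopy shows, since $\pi_1$-injective maps preserve large parts (the Remark after Definition~\ref{large'n'stuff}), that $\Phi_k^{\epsilon}$ is homotopic into $(\Psi_{k+2}^{\epsilon})_{\call}=\Phi_{k+2}^{\epsilon}$. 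But \cite[Proposition 5.3.9]{BCSZ} forbids this unless $\Phi_k^{\epsilon}=\emptyset$, and this is the only point at which the hypothesis that $S$ is not a fiber or semi-fiber is consumed. The nesting $\Phi_1^{\epsilon}\supset\Phi_2^{\epsilon}\supset\cdots$ of \cite[Proposition 5.2.8]{BCSZ} then gives $\Phi_k^{\epsilon}=\Phi_{k+1}^{\epsilon}=\Phi_{k+2}^{\epsilon}=\emptyset$, so $\Psi_k^{\epsilon}$, $\Psi_{k+1}^{\epsilon}$ and $\Psi_{k+2}^{\epsilon}$ are disjoint unions of essential annuli. As $\Psi_{k+2}^{\epsilon}\subset\Psi_k^{\epsilon}$ by Proposition~\ref{carriers}(\ref{containment}) while $\Psi_k^{\epsilon}$ is homotopic into $\Psi_{k+2}^{\epsilon}$, Lemma~\ref{homotopy to iso}(\ref{isotopic to equal}) (valid because $S$, being incompressible in an atoroidal manifold, has no torus or sphere components) shows $\Psi_k^{\epsilon}$ and $\Psi_{k+2}^{\epsilon}$ are isotopic. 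Write $\Psi:=\Psi_k^{\epsilon}$, a nonempty union of, say, $c$ essential annuli.

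Next I would show by induction that $\Psi_k^{\epsilon}$ is isotopic to $\Psi_{k+2m}^{\epsilon}$ for every $m\geq 0$. Given the statement for $m$, and a core curve $\bcurve$ of a component of $\Psi_{k+2m}^{\epsilon}$, Proposition~\ref{carriers}(\ref{eats homotopies}) supplies a reduced homotopy of $\bcurve$ of length $k+2m$ on the $\epsilon$-side; and since $\bcurve$ is also a core of $\Psi_k^{\epsilon}=\Psi$, hence of $\Psi_{k+2}^{\epsilon}$, it admits a reduced homotopy $H^1\ast\cdots\ast H^{k+2}$ of length $k+2$ on the $\epsilon$-side. Because $S$ separates $M$, consecutive $H^i$ map into opposite sides, so the tail $H^3\ast\cdots\ast H^{k+2}$ is a reduced homotopy of length $k$ on the $\epsilon$-side whose time-$0$ map $H^2_1$ is thereby homotopic into $\Psi_k^{\epsilon}=\Psi_{k+2m}^{\epsilon}$, hence admits a reduced homotopy of length $k+2m$ on the $\epsilon$-side; prepending $H^1\ast H^2$, which ends on the $(-\epsilon)$-side, yields a reduced homotopy of $\bcurve$ of length $k+2(m+1)$ on the $\epsilon$-side. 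Thus every core of $\Psi_{k+2m}^{\epsilon}$ is isotopic to a core of $\Psi_{k+2(m+1)}^{\epsilon}$, distinct cores to distinct ones by irredundancy; with Proposition~\ref{carriers}(\ref{containment}) this forces $\Psi_{k+2m}^{\epsilon}\cong\Psi_{k+2(m+1)}^{\epsilon}$, completing the induction. Fixing once and for all a core $\ccurve$ of a component of $\Psi$, we obtain for every $m$ a reduced homotopy of $\ccurve$ of length $k+2m$ on the $\epsilon$-side.

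Now I would extract a recurrence and close the loop. Fix a large $m$ and write such a homotopy $H^1\ast\cdots\ast H^{k+2m}$; for $0\leq\ell\leq m$ set $\ccurve_{2\ell}=H^{2\ell}_1$ (with $\ccurve_0=\ccurve$). The tail $H^{2\ell+1}\ast\cdots\ast H^{k+2m}$ is a reduced homotopy of $\ccurve_{2\ell}$ of length $k+2(m-\ell)\geq k$ on the $\epsilon$-side, so $\ccurve_{2\ell}$ is homotopic into $\Psi$, i.e.\ homotopic to $\kappa_\ell^{n_\ell}$ for a core $\kappa_\ell$ of $\Psi$ and $n_\ell\neq0$; applying Lemma~\ref{along for the ride} to this tail gives a reduced homotopy of $\kappa_\ell$ of the same length on the $\epsilon$-side whose time-$1$ map has $n_\ell$-th power equal to the fixed element $[H^{k+2m}_1]\neq1$ of the torsion-free group $\pi_1M$, so every $n_\ell$ divides its maximal-root exponent and the $|n_\ell|$ are bounded. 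As there are only $c$ cores, the $\ccurve_{2\ell}$ lie in finitely many oriented isotopy classes, so for $m$ large enough pigeonhole produces $\ell<\ell'$ with $\ccurve_{2\ell}$ isotopic to $\ccurve_{2\ell'}$ as oriented curves. Concatenating the sub-homotopies in positions $2\ell+1,\dots,2\ell'$ gives a reduced homotopy $G$ of even positive length $\ell_0=2(\ell'-\ell)$ on the $\epsilon$-side from $\ccurve_{2\ell}$ to an isotopic copy of $\ccurve_{2\ell}$ on the same side; after identifying its end with its start we glue up a map $\bar G\co T^2\to M$ transverse to $S$ with $\bar G^{-1}(S)$ a family of $\ell_0$ parallel essential circles, the complementary annuli mapped essentially into the two sides of $S$ alternately. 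In the tree $T$ determined by $S$ (Lemmas~\ref{treecrawler} and \ref{Bass-Serre}), the meridian class $h=\bar G_*([S^1\times\mathrm{pt}])=[\ccurve_{2\ell}]$ is nontrivial (as $\pi_1S\hookrightarrow\pi_1M$) and lies in an edge stabilizer, hence acts elliptically, while the longitude class $g=\bar G_*([\mathrm{pt}\times S^1])$ carries an edge a distance $\ell_0$ along $T$ with no backtracking at the seam (by the reduced condition and the side-alternation), so $g$ acts hyperbolically with translation length $\ell_0\geq2$. Since $g$ and $h$ commute and the centralizer of the nontrivial element $h$ in the fundamental group of a closed hyperbolic $3$-manifold is infinite cyclic, say $\langle w\rangle$, the relation $g=w^{b}$ forces $w$ to act hyperbolically on $T$; but then $h=w^{a}$ also acts hyperbolically unless $a=0$, contradicting $h\neq1$. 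This contradiction proves the proposition.

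The step I expect to be the main obstacle is the control of the winding numbers $n_\ell$ in the third paragraph. The time-$1$ maps of essential homotopies through solid-torus components of the characteristic submanifolds need not be embedded, so the $\ccurve_{2\ell}$ are only powers of cores of $\Psi$, and one must genuinely use Lemma~\ref{along for the ride} in tandem with the bounded-root-exponent property of the torsion-free hyperbolic group $\pi_1M$ — while keeping the $\epsilon$'s, the indices, and the composability of the resulting homotopies consistent throughout — to make the pigeonhole argument legitimate. A secondary dependency is that the reduction in the first paragraph rests on the substantive \cite[Proposition 5.3.9]{BCSZ}, which is where the fiber and semi-fiber hypotheses are actually needed.
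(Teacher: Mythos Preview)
Your proof is correct and reaches the same contradiction as the paper's --- an essential torus in a hyperbolic manifold --- but by a longer route.

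The paper avoids your induction and pigeonhole entirely. After reducing to the annular case via \cite[Proposition~5.3.9]{BCSZ} (as you do), it takes a reduced homotopy $H$ of length $k+2$ with domain $\Psi_{k+2}^{\epsilon}$, decomposes it as $H''\ast H'$ with $H'$ of length $2$, and observes that $H'_1$ is homotopic (via the hypothesized homotopy $\Psi_k^{\epsilon}\to\Psi_{k+2}^{\epsilon}$) back into $\Psi_{k+2}^{\epsilon}$. This single length-$2$ homotopy with domain and target $\Psi_{k+2}^{\epsilon}$ induces a self-map on the finite set of components, encoded as a directed graph in which every vertex has out-degree one; such a graph has a cycle, and chaining the length-$2$ homotopies around the cycle immediately produces the torus. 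The paper then simply says ``since each $F^i$ is essential, $F$ is essential, contradicting hyperbolicity.''

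What each approach buys: the paper's directed-graph trick is quick and bypasses both your inductive stabilization of the $\Psi_{k+2m}^{\epsilon}$ and the root-exponent bound. On the other hand, your argument is more explicit at two points the paper treats tersely. First, the paper writes $F^i_1=\acurve_{i+1}$ without comment, whereas the time-$1$ map through a solid-torus component could a priori be a proper power of the target core; your use of Lemma~\ref{along for the ride} and the maximal-root bound in $\pi_1 M$ handles this carefully. Second, your Bass--Serre tree argument (elliptic meridian, hyperbolic longitude, cyclic centralizer) spells out why the resulting torus map actually contradicts hyperbolicity, which the paper asserts in one line. A minor quibble: your $\ccurve_{2\ell}$ are not embedded, so ``isotopic as oriented curves'' should read ``freely homotopic in $S$''; this does not affect the argument.
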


\begin{proof}  Proposition 5.3.9 of \cite{BCSZ} asserts that in this situation $\Phi_k^{\epsilon}$ is not homotopic into $\Phi_{k+2}^{\epsilon}$ for any $k \in \mathbb{N}$ or $\epsilon \in \{\pm 1\}$, so the result holds as long as $\Psi_k^{\epsilon}$ has nonempty large part.  Suppose therefore that for some $k$, $\Psi_k^{\epsilon}$ is a disjoint union of annuli homotopic into $\Psi_{k+2}^{\epsilon}$.

Let $H$ be a reduced homotopy in $(M,S)$ of length $k+2$ with domain $\Psi_{k+2}$ that starts on the $\epsilon$-side, and write $H$ as the composition of $H'', H'$, each starting on the $\epsilon$-side, where $H'$ has length $2$ and $H''$ length $k$.  Since $H'_1(\Psi_{k+2}^{\epsilon})$ admits a reduced homotopy of length $k$, Proposition \ref{carriers} implies  that $H'_1$ is homotopic to a map $f \co \Psi_{k+2}^{\epsilon}\to\Psi_k^{\epsilon}$.  After applying the homotopy that takes $\Psi_k^{\epsilon}$ into $\Psi_{k+2}^{\epsilon}$, we may take $f$ to map into $\Psi_{k+2}^{\epsilon}$.  It follows that there exists a homotopy of length $2$ in $(M,S)$ with domain and target $\Psi_{k+2}^{\epsilon}$.

Associate a directed graph $G$ to this homotopy as follows:  $G$ has a vertex $v$ for each component of $\Psi_{k+2}^{\epsilon}$, and a directed edge joining $v$ to $v'$ if and only if the component associated to $v$ is taken to the component associated to $v'$ by the time-$1$ map of the homotopy described above.  Then every vertex has a unique edge that leaves it, and so $G$ has a cycle.

We associate to a cycle $v_0,\hdots,v_{m-1}$ a map of a torus into $(M,S)$ as follows.  For $0\leq i< m$, let $\acurve_i$ be the core of the component of $\Psi_{k+2}^{\epsilon}$ corresponding to $v_i$, and let $F^i\co (S^1\times I,S^1\times\partial I) \to (M,S)$ be a reduced homotopy of length $2$ with $F^i_0 = \acurve_i$ and $F^i_1=\acurve_{i+1}$ (where $i+1$ is taken modulo $m$).  Dividing a torus $T$ into $m$ concentric essential annuli $A_i$, each homeomorphic to $S^1 \times I$, we obtain a map $F\co T \to M$ that restricts on $A_i$ to $F^i$ for each $i$.  Since each $F^i$ is essential, $F$ is essential, contradicting hyperbolicity of $M$.
\end{proof}

We may now prove Theorem \ref{bdd length}, which extends Theorem 5.4.1 of \cite{BCSZ}.  

\begin{proof}[Proof of Theorem \ref{bdd length}]  If $S$ is non-separating, we replace $S$ by the boundary $\tilde{S}$ of a regular neighborhood, yielding a separating surface with two components of genus $g$.  If $S$ is separating we take $\tilde{S} =S$, and in either case let $X^{\pm 1}$ be the components of the manifold obtained by cutting $M$ along $S$.    For $\epsilon \in \{\pm 1\}$, let $\Psi_1^{\epsilon} \supset \Psi_2^{\epsilon} \supset \hdots$ be a sequence of irredundant surfaces that satisfies the conclusion of Proposition \ref{carriers}.  

We now briefly review the proof of Theorem 5.4.1 of \cite{BCSZ}.  Given a large surface $A$, a complexity of $A$ is defined as $c(A) = g(A)-3\chi(A)/2-|A|$, where $\chi(A)$ is the Euler characteristic of $A$, $|A|$ is the number of its components, and $g(A)$ is the sum of their genera.  It is easy to see that if $A$ is nonempty and large, then $c(A) >0$.  The key fact established in the proof of Theorem 5.4.1 is that if $A$ and $B\subset A$ are large surfaces with even Euler characteristic, then $c(B) <c(A)$ unless $A$ is a regular neighborhood of $B$.

Fixing $\epsilon \in \{\pm 1\}$, consider the subsequence 
$$\Phi_1^{\epsilon} \supset \Phi_3^{\epsilon} \supset \hdots$$ 
This is strictly decreasing by \cite[Proposition 5.3.9]{BCSZ}, and consists of large surfaces with even Euler characteristic by \cite[Corollary 5.3.8]{BCSZ}.  Thus for each $i \geq 0$, $c(\Phi_{2i+1}^{\epsilon}) > c(\Phi_{2i+3}^{\epsilon})$.  If $S$ is separating, then $c(\tilde{S}) = c(S) = 4g-4$, and otherwise $c(\tilde{S}) = 8g-8$.   Taking $m_{S} = 4g-4$ in the separating case and $m_S = 8g-8$ in the non-separating case, it follows that for $i > m_S$, $\Phi_{2i+1} = \emptyset$.  

The discussion above is enough to establish \cite[Theorem 5.4.1]{BCSZ}.  In our situation of interest, it establishes that $\Psi_{2i+1}^{\epsilon}$ is a disjoint union of annuli for $i>m_S$.  Since $\Psi_i^{\epsilon}$ is irredundant, $\Psi_{2m_S+3}$ has at most $3g-3$ components in the separating case, and $6g-6$ otherwise.  (This uses the standard fact that a collection of disjoint, non-parallel, essential simple closed curves on a closed surface of genus $g$ has at most $3g-3$ members.)  Since Proposition \ref{shrinkage} implies $\Psi_{2i+1}^{\epsilon}$ is not homotopic into $\Psi_{2i+3}^{\epsilon}$, if these are unions of irredundant collections of annuli then $\Psi_{2i+3}^{\epsilon}$ has fewer components than $\Psi_{2i+1}^{\epsilon}$.  Thus taking $n_S = 3g-3$ in the separating case and $n_S = 6g-6$ otherwise, we find that $\Psi_{2i+1} = \emptyset$ for $i > m_S+n_S$.

By Proposition \ref{carriers}, the time-$0$ map of a reduced homotopy in $(M,\tilde{S})$ with length $k$ that starts on the $\epsilon$-side is homotopic into $\Psi_k^{\epsilon}$.  Therefore $k \leq 2(m_S+n_S)+2$.  If $S$ is separating, we therefore find that homotopies in $(M,S) = (M,\tilde{S})$ have length at most $14g-12$.  If $S$ is non-separating, a reduced homotopy of length $k$ in $(M,S)$ determines a reduced homotopy of length $2k-1$ in $(M,\tilde{S})$.  Thus in this case we have for a homotopy of length $k$ in $(M,S)$ that $2k-1 \leq 2(14g-14)+2$, so $k \leq 14g-13$.  The theorem follows.  \end{proof}

%%%%%%%%%%%%%%%
\bibliographystyle{plain}
\bibliography{posgrad_bib2}
%%%%%%%%%%%%%%%
\end{document}